\newtheorem{theorem}{Theorem}[section]
\newtheorem{definition}{Definition}[section]
\newtheorem{example}{Counterexample}[section] 
\newtheorem{eg}{Example}[section]
\newtheorem{result}{Result}[section] 
\title{Variance Residual Life Ageing Intensity Function}
\author{
Ashutosh Singh \\
\small Department of Mathematics and Statistics, Indian Institute of Technology Tirupati-$\textsc{\footnotesize 517619}$, India \\
\textit{\footnotesize E-mail address:} \texttt{\small ma23d501@iittp.ac.in}
}
\date{} 
\begin{document}

\maketitle

\noindent\rule{\textwidth}{1pt}

\noindent\textbf{Abstract}

\begin{flushleft}
\begin{justify}
Quantitative measurement of ageing across systems and components is crucial for accurately assessing reliability and predicting failure probabilities. This measurement supports effective maintenance scheduling, performance optimisation, and cost management. Examining the ageing characteristics of a system that operates beyond a specified time $t > 0$ yields valuable insights. This paper introduces a novel metric for ageing, termed the Variance Residual Life Ageing Intensity (VRLAI) function, and explores its properties across various probability distributions. Additionally, we characterise the closure properties of the two ageing classes defined by the VRLAI function. We propose a new ordering, called the Variance Residual Life Ageing Intensity (VRLAI) ordering, and discuss its various properties. Furthermore, we examine the closure of the VRLAI order under coherent systems.

\end{justify}

\end{flushleft}

\noindent\rule{\textwidth}{1pt}

\vspace{10pt}

\noindent\textbf{Keywords:} \textsf{Ageing Intensity functions; Coherent system; Stochastic orders;  Variance residual life function}

\section{Introduction }
\begin{justify}
In reliability theory, ageing is described as how the probability of failure of a system or component increases over time. Mathematically, ageing is characterised by an increasing failure rate (IFR), where the hazard function $r(t) = \frac{f(t)}{\bar{F}(t)}$ is seen to rise with time, indicating that the system's risk of failure is understood to grow as it ages. Here, $f(t)$ is represented as the probability density function of failure time, and $\bar{F}(t)$ is represented as the survival function. This concept is used to aid in modelling and predicting the reliability of systems, providing insights into how the failure probability is observed to evolve with the system’s age.\\

Jiang et al.~(2003)~\cite{jiang2003aging} introduced the concept of Ageing Intensity (AI) to quantitatively assess ageing properties, defining AI as the ratio of the failure rate to a baseline failure rate. Nanda et al.~(2007)~\cite{nanda2007properties} discussed the properties of the AI function and analysed its behaviour across various distributions. Bhattacharjee et al.~(2013)~\cite{bhattacharjee2013reliability} emphasised that the AI function provided a quantitative approach to evaluating system ageing, playing a key role in the analysis of generalised Weibull models and system reliability.\\

Magdalena (2018)~\cite{SZYMKOWIAK2018198} introduced a family of generalised ageing intensity functions to characterise the lifetime distributions of univariate positive, absolutely continuous random variables. Buono et al.~(2021)~\cite{buono2021generalized} proposed a family of generalised reversed ageing intensity functions, showing that if the parameter was positive, it uniquely characterised the distribution functions of univariate positive absolutely continuous random variables. Francesco Buono (2022)~\cite{buono2022multivariate} extended the concept of ageing intensity functions to the multivariate case using multivariate conditional hazard rate functions.
\\

The quantitative measures of ageing that we discussed were derived from the hazard rate function, while the residual life function played a key role in analysing ageing behaviour. It was important to study the ageing behaviour of a system that had already operated for a specified duration $t > 0$.\\
\begin{justify}
In the literature, the ageing intensity function has been explored based on the mean residual life (MRL). Ashutosh et al.~(2024) introduced a new ageing intensity function, referred to as the Mean Residual Life Ageing Intensity (MRLAI) function, \( L_X^{\mu}(t) \), defined as

\begin{equation*}
    L_X^{\mu}(t) = \frac{\mu_X(t)}{\frac{1}{t}\int_0^t \mu_X(u) \, du}, \quad \text{for $0 < t < \infty$},
\end{equation*}
where \( \mu_X(t) \) denotes the mean residual life function, and its characteristics have been analysed across various distributions.\\

Earlier, Launer (1984)~\cite{launer1984inequalities} introduced the concept of variance residual life (VRL) and investigated a class of distributions characterised by either increasing or decreasing variance in residual life. As the variance residual life function is based on two moments of residual function, it provides a more precise representation of a system or component's ageing behaviour.
\end{justify}
\begin{justify}
This paper provides a detailed discussion on the ageing intensity function based on the variance of residual life. If $X$ represents the lifetime of a component, then the random variable $X_{t} = X - t \mid X > t$ is referred to as the residual life after time $t$. The mean value of $X_{t}$ is given by

\begin{equation*}
    \mu_{X}(t) = E(X_{t}) = E(X - t \mid X > t) = \dfrac{\int_{t}^{\infty}\bar{F_{X}}(x) \, dx}{\bar{F_{X}}(t)}, \quad t \geq 0.
\end{equation*}

It is assumed that $\mu_{X}(0) = E(X) < \infty$ and $E(X^{2}) < \infty$. The variance of $X_{t}$ is defined as

\begin{equation*}
  \begin{split}
      \sigma^{2}_{X}(t) = \text{Var}(X_{t}) &= \text{Var}(X - t \mid X > t)\\
      &= \frac{2}{\bar{F_{X}}(t)}\int_{t}^{\infty}\int_{x}^{\infty}\bar{F_{X}}(x) \, dy \, dx - \mu_{X}^{2}(t)\\
      &= \frac{2}{\bar{F_{X}}(t)}\int_{t}^{\infty}\mu_{X}(x)\bar{F_{X}}(x) \, dx - \mu_{X}^{2}(t), \quad t \geq 0.
  \end{split} 
\end{equation*}

The variance of $X_{t}$ is referred to as the Variance Residual Life (VRL) function.

\end{justify}
\begin{justify}
 Abouammoh et al. (1990)~\cite{ABOUAMMOH1990751} established a relationship between the mean residual life (MRL) function, $\mu_X(t)$, the variance residual life (VRL) function, $\sigma^2_X(t)$, and the survival function. The result demonstrates that, with $\mu_X(t)$ representing the MRL function and $\sigma^2_X(t)$ representing the VRL function, the survival function $\bar{F_X}(t)$ takes the form:
\begin{result}\label{rs:1.1}
\begin{equation*}
    \bar{F_X}(t) = e^{-\bigintsss_{0}^{t}\dfrac{\frac{d}{dt}\sigma^{2}_{X}(t)}{\sigma^{2}_{X}(t) - \mu_{X}^{2}(t)} \, dx}.
\end{equation*}
\end{result}

\end{justify}
\begin{justify}
Building on this, Cox (1962)~\cite{cox1962renewal} and Meilijson (1972)~\cite{meilijson1972limiting} explored the relationship between the MRL function and the survival function. The analysis establishes that, under the conditions $F(0) = 0$ and $F$ being right continuous, the relationship between the reliability function and the MRL function is given by:
\begin{result}\label{rs:1.2}
\[
\bar{F}(t) = \begin{cases} 
       \frac{\mu_{X}(0)}{\mu_{X}(t)} e^{-\int_{0}^{t} \frac{1}{\mu_{X}(u)} \, du} & \text{for } 0 \leq t < F^{-1}(1), \\
       0 & \text{for } F^{-1}(1) \leq t < \infty,
   \end{cases}
\]
where $F^{-1}(1) = \sup\{t \mid F(t) < 1\}$.
\end{result}

\end{justify}
\begin{justify}
Additionally, Gupta(1987) \cite{gupta1987monotonic} derived the relationship between the hazard rate function, MRL function, and VRL function. It is given by:
\begin{result}\label{rs:1.3}
    \[\frac{d}{dt} \sigma^{2}_{X}(t) = r_{X}(t)\left( \sigma^{2}_{X}(t) - \mu_{X}^{2}(t)\right)\]
\end{result}    
\end{justify}
\begin{justify}
Further, Gupta (2006)~\cite{gupta2006variance} investigated the connection between the monotonic behaviour of the mean residual life classes and the variance residual life classes. 
\begin{result}\label{res:1.4}
 DMRL(IMRL) property implies the DVRL(IVRL) property.
\end{result}    
\end{justify}
\vspace{0.1cm}
This section is concluded by referencing additional sources on quantitative measures of ageing. Notably, a comprehensive overview of these measures is provided by Magdalena Szymkowiak (2020)~\cite{sunoj2020ageing}, while a detailed discussion on stochastic ageing is offered by Lai et al. (2006)~\cite{lai2006stochastic}. Furthermore, a brief discussion on the different stochastic orders available is found in Shaked and Shantikumar (1994)~\cite{shaked2007stochastic}.\\
This manuscript is structured as follows: Section 2 defines the variance residual life function and its key properties are investigated. Section 3 examines the closure properties of two classes—Increasing VRLAI and Decreasing VRLAI. In Section 4, a new order, termed VRLAI order, is introduced and its properties are rigorously analysed. Furthermore, the closure properties of VRLAI in the context of coherent systems are explored in detail.
\end{justify}
\section{Variance Residual Life Ageing Intensity Function}
\begin{definition}
For a non-negative random variable $X$, the Variance Residual Life Ageing Intensity (VRLAI) function is defined as the ratio of the variance of $X_{t}$ to its average value. 
\begin{equation*}
    L_{X}^{\sigma^{2}}(t) = \frac{t \cdot \sigma^{2}_{X}(t)}{\int_{0}^{t} \sigma^{2}_{X}(u) \, du}
\end{equation*} 
\end{definition}
It can be observed that:
\begin{itemize}
    \item If \( L_{X}^{\sigma^{2}}(t) > 1 \), the uncertainty of the residual life at time $t$ is greater than the average uncertainty up to time $t$.
    \item If \( L_{X}^{\sigma^{2}}(t) = 1 \), the uncertainty of the residual life at time $t$ is equal to the average uncertainty up to time $t$.
    \item If \( L_{X}^{\sigma^{2}}(t) < 1 \), the uncertainty of the residual life at time $t$ is less than the average uncertainty up to time $t$.
\end{itemize}

\subsection{Distribution Characterisation through VRLAI Function}
\begin{theorem}\label{thm:2.1}
    For a non negative random variable $X$, $ L_{X}^{\sigma^{2}}(t) = 1$ for $t\geq 0$ if and only if $X$ fallows exponential distribution.
\end{theorem}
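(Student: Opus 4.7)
The plan is to reduce the identity $L_X^{\sigma^2}(t)\equiv 1$ to the statement that the variance residual life $\sigma_X^2(t)$ is constant, and then chain the structural identities from Section~1 (Results~\ref{rs:1.1}--\ref{rs:1.3}) to pin down the distribution as exponential. I would structure the argument as a pair of implications.

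For the forward direction, I would first rewrite $L_X^{\sigma^2}(t)=1$ as the integral equation $t\,\sigma_X^2(t)=\int_0^t \sigma_X^2(u)\,du$. Since $\sigma_X^2$ is differentiable (as $F$ has a density with $E(X^2)<\infty$), differentiating both sides with respect to $t$ collapses the identity to $\tfrac{d}{dt}\sigma_X^2(t)=0$ on $t>0$, so $\sigma_X^2(t)$ is constant on $[0,\infty)$. Now I would substitute this into Result~\ref{rs:1.3}: the left-hand side $\tfrac{d}{dt}\sigma_X^2(t)$ vanishes, while the right-hand side is $r_X(t)\bigl(\sigma_X^2(t)-\mu_X^2(t)\bigr)$. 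Since $r_X(t)>0$ throughout the support of $X$, this forces $\mu_X^2(t)=\sigma_X^2(t)$, so the mean residual life $\mu_X(t)$ is also constant, equal to $\mu_X(0)=E(X)$. Finally, plugging the constant MRL $\mu_X(t)\equiv\mu_X(0)$ into the Cox--Meilijson representation (Result~\ref{rs:1.2}) yields $\bar F(t)=\exp\bigl(-t/\mu_X(0)\bigr)$, which is the exponential survival function with rate $1/\mu_X(0)$.

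For the converse, if $X$ is exponential with rate $\lambda$, the memoryless property says $X_t\stackrel{d}{=}X$, so $\sigma_X^2(t)=1/\lambda^2$ is constant in $t$; substituting this into the definition of $L_X^{\sigma^2}$ gives $L_X^{\sigma^2}(t)=t\cdot(1/\lambda^2)\big/\bigl(t\cdot(1/\lambda^2)\bigr)=1$ for every $t\geq 0$.

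The only genuinely delicate point is the step that converts $\tfrac{d}{dt}\sigma_X^2(t)=0$ into constancy of $\mu_X$: it relies on $r_X(t)>0$ almost everywhere on the support of $X$, which is the standing hypothesis built into the model (non-negative random variable with finite second moment and unbounded support). Once that observation is made, the chain of implications through Results~\ref{rs:1.2} and~\ref{rs:1.3} is essentially automatic, and no direct inversion of Result~\ref{rs:1.1} is needed.
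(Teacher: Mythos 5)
Your proof follows essentially the same route as the paper's: differentiate the integral identity to conclude that $\sigma_X^2(t)$ is constant, pass through Result~\ref{rs:1.3} to obtain constancy of $\mu_X(t)$, and invoke the Cox--Meilijson representation (Result~\ref{rs:1.2}) to identify the exponential survival function, with the converse handled by direct substitution. You are in fact slightly more careful than the paper at the middle step, correctly deducing $\mu_X^2(t)=\sigma_X^2(t)$ (so $\mu_X(t)=\sqrt{\lambda}$, constant) where the paper loosely writes $\mu_X(t)=\lambda$.
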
 
\begin{proof}
Let \( L_X^{\sigma^{2}}(t) = 1 \) if and only if 
\[
L_X^{\sigma^{2}}(t) = \frac{t \cdot \sigma^{2}_{X}(t)}{\int_{0}^{t} \sigma^{2}_{X}(u) \, du} = 1.
\]
\begin{align*}
&\Leftrightarrow  \int_{0}^{t} \sigma^{2}_{X}(u) \, du = t \cdot \sigma^{2}_{X}(t) \quad \forall t \\
&\Leftrightarrow  \frac{d}{dt} \int_{0}^{t} \sigma^{2}_{X}(u) \, du = \frac{d}{dt} (t \cdot \sigma^{2}_{X}(t)) \\
&\Leftrightarrow  \sigma^{2}_{X}(t) = \sigma^{2}_{X}(t) + t \cdot \frac{d}{dt} \left( \sigma^{2}_{X}(t) \right) \\
&\Leftrightarrow  t \cdot \frac{d}{dt} \left( \sigma^{2}_{X}(t) \right) = 0 \\
&\Leftrightarrow  \frac{d}{dt} \left( \sigma^{2}_{X}(t) \right) = 0 \\
&\Leftrightarrow  \sigma^{2}_{X}(t) = \lambda  \quad \forall t 
\end{align*}
where \(\lambda\) is a constant. Now, using result (\ref{rs:1.3}), we get \(\mu_{X}(t) = \lambda\). Furthermore, from result (\ref{rs:1.2}), we obtain
\[
\bar{F}_{X}(t) = e^{-\frac{t}{\lambda}},
\]
which is the survival function of the exponential distribution.\\
Conversely, if \(\sigma^{2}_{X}(t) = \lambda\), then
\[
L_X^{\sigma^{2}}(t) = \frac{t \cdot \lambda}{\int_{0}^{t} \lambda \, du} = \frac{t \cdot \lambda}{t \cdot \lambda} = 1.
\]
\end{proof}

\begin{theorem} \label{thm:2.2}
  If \( X \) follows a Pareto distribution, then \( L_{X}^{\sigma^2}(t) = 3 \) for all \( t \geq 1 \).

\end{theorem}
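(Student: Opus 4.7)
My plan is to compute $\sigma^{2}_{X}(t)$ explicitly for the Pareto law, observe that it is a pure quadratic in $t$, and then let the homogeneity of $u \mapsto u^{2}$ close the argument with essentially no work in the final ratio.

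First, I would fix the Pareto survival function $\bar{F}_{X}(t) = t^{-\alpha}$ on $[1,\infty)$, with tail index $\alpha > 2$ so that the second moment (and hence the variance residual life) is finite. A direct integration of $\bar{F}_{X}$ then yields
\[
\mu_{X}(t) \;=\; \frac{1}{\bar{F}_{X}(t)}\int_{t}^{\infty} \bar{F}_{X}(x)\,dx \;=\; \frac{t}{\alpha-1}, \qquad t \geq 1.
\]

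Next, I would substitute $\mu_{X}$ and $\bar{F}_{X}$ into the representation
$\sigma^{2}_{X}(t) = \frac{2}{\bar{F}_{X}(t)}\int_{t}^{\infty}\mu_{X}(x)\bar{F}_{X}(x)\,dx - \mu_{X}^{2}(t)$ appearing in the introduction. Because the two factors are pure powers, the inner integral collapses to a multiple of $t^{2-\alpha}$; dividing by $\bar{F}_{X}(t)=t^{-\alpha}$ restores a $t^{2}$ factor, and combining with the $-\mu_{X}^{2}(t)$ term I expect the closed form
\[
\sigma^{2}_{X}(t) \;=\; \frac{\alpha}{(\alpha-1)^{2}(\alpha-2)}\, t^{2} \;=\; C\, t^{2}, \qquad t \geq 1.
\]

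The conclusion is then a one-line consequence of homogeneity: since $\sigma^{2}_{X}(u)=Cu^{2}$, we have $\int_{0}^{t} C u^{2}\,du = Ct^{3}/3$ and $t\,\sigma^{2}_{X}(t)=Ct^{3}$, so the constant $C$ and the $t^{3}$ factor cancel and $L_{X}^{\sigma^{2}}(t)=3$. The only delicate point, and the place where I would be careful, is the lower limit $0$ in the denominator: because the Pareto is supported on $[1,\infty)$, the power-law expression $Cu^{2}$ for $\sigma_{X}^{2}(u)$ strictly holds only for $u\geq 1$. I anticipate handling this by extending the same quadratic expression to $[0,t]$ (which is consistent with $\sigma_{X}^{2}(1)=C=\mathrm{Var}(X)$), after which the clean cancellation described above gives the claimed value $3$ for every $t\geq 1$.
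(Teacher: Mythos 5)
Your proposal is correct and follows essentially the same route as the paper: compute $\mu_X(t)$, show that $\sigma^2_X(t)$ is a constant multiple of $t^2$, and let the cancellation in the ratio $t\,\sigma^2_X(t)\big/\int_0^t\sigma^2_X(u)\,du$ give the value $3$. You are in fact more careful than the paper on the one genuine subtlety --- that the closed form $\sigma^2_X(u)=Cu^2$ holds only for $u$ in the support, while the denominator integrates from $0$ --- which the paper silently glosses over, and your constant $\alpha/\bigl((\alpha-1)^2(\alpha-2)\bigr)$ is the correct one (the paper's numerator $3a-1$ should read $a+1$ in its parametrization), though neither point affects the final value $3$.
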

\begin{proof}
 The density function of $X$ is given by
\[ f_{X}(t) = \begin{cases} 
      \dfrac{(a+1)b^{a+1}}{t^{a+2}} & t\geq b \\
       0 & t < b 
   \end{cases}
\]
The survival function is given by
\[ \bar{F_{X}}(t) = \begin{cases} 
      \left(\dfrac{b}{t}\right)^{a+1} & b \leq t < \infty \\
       1 & -\infty < t < b
   \end{cases}
\]   
Now,
\begin{eqnarray*}
    \mu_{X}(t) &=& \dfrac{\int_{t}^{\infty}\bar{F_{X}}(x) dx}{\bar{F_{X}}(t) }\\
    &=& \dfrac{\int_{t}^{\infty} \left(\dfrac{b}{x}\right)^{a +1}dx}{\left(\dfrac{b}{t}\right)^{a+1} }\\
    &=& \dfrac{t}{a}  \hspace{1cm} a >  1
\end{eqnarray*}
Then, \[\sigma^{2}_{X}(t) = \dfrac{t^{2}(3a-1)}{a^{2}(a-1)}\hspace{1cm} a > 1\]
\[ L_{X}^{\sigma^{2}}(t) = \dfrac{t.\sigma^{2}_{X}(t)}{\int_{0}^{t}\sigma^{2}_{X}(u)du}  = 3.\] 

\end{proof}
\subsection{Properties of VRLAI Function }
\begin{theorem}\label{thm:2.3}
    For a non negative random variable $X$, $ L_{X}^{\sigma^{2}}(t) = 1$ for $t\geq 0$ if and only if  $\sigma_{X}^{2}(t)$ is constant in t.
\end{theorem}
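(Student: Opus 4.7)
The plan is to establish the equivalence by the same differentiation argument used in the first half of Theorem~\ref{thm:2.1}, stopping before the step that invokes Results~\ref{rs:1.3} and~\ref{rs:1.2} to identify the exponential distribution. In effect, Theorem~\ref{thm:2.3} isolates the purely analytic content already present in Theorem~\ref{thm:2.1}: namely that the defining identity $L_X^{\sigma^2}(t)=1$ is equivalent to constancy of $\sigma_X^2(t)$, independently of the reliability-theoretic interpretation.

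For the forward direction, I would rewrite $L_X^{\sigma^2}(t)=1$ as
\[
\int_0^t \sigma_X^2(u)\,du = t\,\sigma_X^2(t) \qquad \text{for all } t>0,
\]
and differentiate both sides with respect to $t$. By the fundamental theorem of calculus the left side gives $\sigma_X^2(t)$, while the product rule on the right gives $\sigma_X^2(t) + t\,\frac{d}{dt}\sigma_X^2(t)$. Cancelling $\sigma_X^2(t)$ yields $t\,\frac{d}{dt}\sigma_X^2(t)=0$ for all $t>0$, hence $\frac{d}{dt}\sigma_X^2(t)=0$, so $\sigma_X^2(t)$ is constant. For the converse, if $\sigma_X^2(t)=\lambda$ for some constant $\lambda$, then $\int_0^t \sigma_X^2(u)\,du = \lambda t$ and direct substitution into the definition of $L_X^{\sigma^2}(t)$ gives the value $1$.

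There is no real obstacle; the only point worth noting is a mild regularity issue. The differentiation step presumes that $\sigma_X^2(t)$ is sufficiently smooth (continuous suffices for the fundamental-theorem-of-calculus step, and differentiability is needed for the product-rule step). These regularity assumptions are implicit throughout the paper — since $\mu_X(t)$ and $\sigma_X^2(t)$ arise as integrals of $\bar F_X$, they inherit the needed smoothness from the assumption that the density $f$ exists. I would simply invoke these standard conditions without further comment, matching the style already used in the proof of Theorem~\ref{thm:2.1}.
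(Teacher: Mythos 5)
Your argument is correct and is essentially the paper's own: the paper proves Theorem~\ref{thm:2.3} simply by pointing back to Theorem~\ref{thm:2.1}, whose proof contains exactly the chain of equivalences (differentiate $\int_0^t \sigma_X^2(u)\,du = t\,\sigma_X^2(t)$, cancel, conclude $\frac{d}{dt}\sigma_X^2(t)=0$) that you reproduce, stopping before the identification of the exponential distribution. Your remark on the implicit smoothness of $\sigma_X^2(t)$ is a reasonable observation but does not change the substance; the approaches coincide.
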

\begin{proof}
This implies that \( X \) follows an exponential distribution, as demonstrated by Theorem~\ref{thm:2.1}.

\end{proof}
\begin{theorem}\label{thm:2.4}
   For a non negative random variable $X$,if $\sigma_{X}^{2}(t)$ is increasing in  $t>0$ then  $L_X^{\sigma^{2}}(t) > 1$  but the converse may  not true.
\end{theorem}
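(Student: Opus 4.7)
The plan is to split the statement into the direct implication and the counterexample for the converse, handling them separately.

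For the forward direction, I would argue purely by an averaging inequality. If $\sigma_{X}^{2}(t)$ is (strictly) increasing in $t > 0$, then for every $u \in (0, t)$ we have $\sigma_{X}^{2}(u) < \sigma_{X}^{2}(t)$. Integrating this pointwise inequality over $u \in (0, t)$ yields
\[
\int_{0}^{t} \sigma_{X}^{2}(u)\, du < t \cdot \sigma_{X}^{2}(t),
\]
and dividing by the positive quantity $\int_{0}^{t} \sigma_{X}^{2}(u)\, du$ gives $L_{X}^{\sigma^{2}}(t) > 1$ immediately from the definition. A cleaner way to package this, which I would prefer, is to set $G(t) = \frac{1}{t} \int_{0}^{t} \sigma_{X}^{2}(u)\, du$ and compute
\[
G'(t) = \frac{t \sigma_{X}^{2}(t) - \int_{0}^{t} \sigma_{X}^{2}(u)\, du}{t^{2}} = \frac{\sigma_{X}^{2}(t)\,(L_{X}^{\sigma^{2}}(t) - 1)}{t \, L_{X}^{\sigma^{2}}(t)},
\]
so that $L_{X}^{\sigma^{2}}(t) > 1$ is equivalent to $G$ being strictly increasing at $t$. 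Monotonicity of $\sigma_{X}^{2}$ trivially forces the running mean $G$ to be strictly increasing.

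For the failure of the converse, the above equivalence is exactly what I would exploit: I only need $G$ to be strictly increasing, which is a strictly weaker requirement than monotonicity of $\sigma_{X}^{2}$ itself. The plan is therefore to exhibit a non-negative random variable whose VRL function $\sigma_{X}^{2}$ is not monotone on $(0, \infty)$, yet whose running average $G(t)$ still increases throughout. Candidates I would test are (i) a finite mixture of two exponentials with well-separated means, which is known to produce non-monotone VRL behaviour on an initial interval, and (ii) a Weibull or gamma distribution with shape parameter chosen so that $\sigma_{X}^{2}(t)$ has a local dip near the origin; for such examples the early dip is small compared to the eventual growth, so the cumulative average still climbs. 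I would compute $\mu_{X}(t)$ first, then obtain $\sigma_{X}^{2}(t)$ via Result~\ref{rs:1.3}, and finally check numerically (or symbolically) that $t \sigma_{X}^{2}(t) > \int_{0}^{t}\sigma_{X}^{2}(u)\,du$ on the whole half-line while $\sigma_{X}^{2}$ is non-monotone somewhere.

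The main obstacle is the converse part: while the equivalence with the monotonicity of $G$ makes it clear on principle that the converse must fail, producing a clean closed-form counterexample is delicate, because one must keep the running average strictly increasing everywhere while allowing $\sigma_{X}^{2}$ itself to have a local decrease. The forward direction is essentially a one-line integral comparison and carries no real difficulty.
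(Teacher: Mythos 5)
Your forward direction is complete and correct, and it is exactly the argument the paper has in mind when it calls that half ``straightforward'': integrating $\sigma_{X}^{2}(u) < \sigma_{X}^{2}(t)$ over $u \in (0,t)$ gives $\int_{0}^{t}\sigma_{X}^{2}(u)\,du < t\,\sigma_{X}^{2}(t)$, i.e. $L_{X}^{\sigma^{2}}(t)>1$. Your reformulation via the running mean $G(t)=\frac{1}{t}\int_{0}^{t}\sigma_{X}^{2}(u)\,du$, under which $L_{X}^{\sigma^{2}}(t)>1$ is equivalent to $G'(t)>0$, is a nice addition: it explains \emph{why} the converse must fail (monotonicity of the running average is strictly weaker than monotonicity of the integrand), which the paper never articulates.

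The gap is in the converse. You correctly identify that you need a distribution whose VRL function is non-monotone while its running average still increases everywhere, but you stop at naming candidate families (exponential mixtures, Weibull, gamma) and deferring to a numerical check; no specific distribution is written down and verified, and you yourself flag this step as ``delicate.'' Since the converse is half of the theorem, the proof is not complete without it. The paper closes this gap with an explicit construction (its Counterexample~\ref{cg:2.1}): the spliced survival function $\bar{F}_{X}(t)=e^{-t}$ on $[0,1]$ and $\bar{F}_{X}(t)=e^{-1}/t^{2}$ for $t>1$, i.e. an exponential head glued to a Pareto tail. On the tail $\mu_{X}(t)=t$ and $\sigma_{X}^{2}(t)$ grows quadratically, so the cumulative average is dominated by the current value and $L_{X}^{\sigma^{2}}(t)>1$ persists, while the splice produces a local decrease in $\sigma_{X}^{2}$ near $t=1$. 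If you want to salvage your own route, a splice of this kind (rather than a smooth one-parameter family) is the easiest way to force the dip in $\sigma_{X}^{2}$ while keeping it harmless to the running average; whichever example you choose, you must actually exhibit $\sigma_{X}^{2}(t)$ in closed form and verify both properties, not merely assert that some candidate will work.
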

\begin{proof}
  The proof is straightforward and, therefore, omitted. The converse can be demonstrated using the following counterexample.
  \end{proof}
\begin{example}\label{cg:2.1}
 Let $\bar{F_{X}}(t)$   be the survival function defined by 
 \[\bar{F_{X}}(t) = \begin{cases}
     e^{-t}, & 0 \leq t \leq 1\\
     \dfrac{e^{-1}}{t^{2}}, & t > 1
 \end{cases}\]
 Now, it's mean residual life function is 
  \[\mu_{X}(t) = \begin{cases}
    1, & 0 \leq t \leq 1\\
    t, & t>1
\end{cases} \]  
Then, \[L_{\mu}^{X}(t) = \begin{cases}
    1, & 0 \leq t \leq 1\\
  \dfrac{(1+t^{2})}{2}, & t > 1
\end{cases}\]
 which is a monotonically increasing function in $t$.The variance residual life (VRL) function is given by
   \[ \sigma_{X}^{2}(t) = \begin{cases}
        2\mathrm{e}^{-2t}-1,  & 0 \leq t \leq 1\\
        t^2\cdot\left(2\mathrm{e}^{1-t}-1\right), & t>1
    \end{cases}\]
Hence, the VRLAI function of $X$ is 
\[L_{X}^{\sigma^{2}}(t) = \begin{cases}
   \dfrac{t\cdot\left(\mathrm{e}^{2t}-2\right)}{\left(t-1\right)\mathrm{e}^{2t}+1}, & 0 \leq t \leq 1\\
   \dfrac{3\mathrm{e}^2t^3\cdot\left(\mathrm{e}^t-2\mathrm{e}\right)}{\mathrm{e}^2\cdot\left(\left(t^3-31\right)\mathrm{e}^t+6\mathrm{e}\cdot\left(t^2+2t+2\right)\right)+3\mathrm{e}^t}, & t > 1  
\end{cases}\]
It can be seen from Figure~(\ref{fig:1}) that \(\sigma^{2}_{X}(t)\) is not an increasing function of \(t\), but \(L_{X}^{\sigma^{2}}(t) > 1\).
\begin{figure}[H] 
    \centering
    \includegraphics[width=0.7\textwidth]{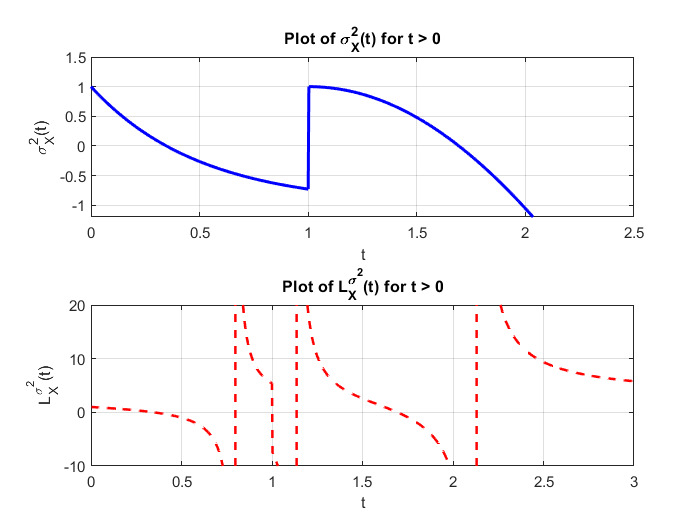}
    \caption{Plot of \(\sigma_{X}^{2}(t)\)  and \(L_{X}^{\sigma^{2}}(t) \)}
    \label{fig:1}
\end{figure}

\end{example}
\begin{theorem}\label{thm:2.5}
For a non negative random variable $X$,if $\sigma_{X}^{2}(t)$ is decreasing in $t>0$  then  $L_X^{\sigma^{2}}(t) < 1$ but the converse may not be true.
\end{theorem}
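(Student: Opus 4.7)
The plan is to mirror the structure of Theorem~\ref{thm:2.4} with the inequalities reversed, separating the direct implication from the counterexample for the converse.

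For the forward direction, I would argue directly from the definition. Assume $\sigma_X^{2}(t)$ is strictly decreasing in $t>0$. Then for every $u \in (0,t)$ one has $\sigma_X^{2}(u) > \sigma_X^{2}(t)$. Integrating this pointwise inequality on $(0,t)$ yields
\begin{equation*}
\int_0^t \sigma_X^{2}(u)\,du \;>\; \int_0^t \sigma_X^{2}(t)\,du \;=\; t\,\sigma_X^{2}(t),
\end{equation*}
and dividing by the positive quantity $\int_0^t \sigma_X^{2}(u)\,du$ gives $L_X^{\sigma^{2}}(t) < 1$ as required. (If only weak monotonicity is assumed, one obtains $L_X^{\sigma^{2}}(t) \leq 1$; the authors phrase the hypothesis so that strict inequality holds, matching the parallel theorem.) This step is essentially a one-line application of the mean-value comparison between a function and its running average, so it justifies the authors' remark that the proof is ``straightforward.''

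For the converse, I would build a counterexample in the same spirit as Counterexample~\ref{cg:2.1}, but tuned so that $\sigma_X^{2}$ has an increase on some subinterval while the cumulative average $\frac{1}{t}\int_0^t \sigma_X^{2}(u)\,du$ still dominates $\sigma_X^{2}(t)$. A natural attempt is to piece together a survival function whose VRL starts at a high value, dips, rises briefly, and then decays again; candidates include a two-piece survival function (exponential joined to a Pareto-type tail, as in Counterexample~\ref{cg:2.1}) but with parameters chosen so the roles of increasing/decreasing VRL are swapped. Concretely I would try
\begin{equation*}
\bar F_X(t) = \begin{cases} e^{-t}, & 0 \leq t \leq t_0, \\ c\, t^{-\alpha}, & t > t_0, \end{cases}
\end{equation*}
with $c,\alpha,t_0$ chosen so that $\bar F_X$ is continuous, then compute $\mu_X(t)$ and $\sigma_X^{2}(t)$ piecewise and verify numerically (and graphically, as in Figure~\ref{fig:1}) that $\sigma_X^{2}$ is not monotonically decreasing yet $L_X^{\sigma^{2}}(t)<1$ on the entire range.

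The main obstacle is the counterexample: one must pick the gluing parameters carefully so that on the non-decreasing portion of $\sigma_X^{2}(t)$ the accumulated average $\frac{1}{t}\int_0^t \sigma_X^{2}(u)\,du$ remains larger than $\sigma_X^{2}(t)$. This is a delicate balance because an increase in $\sigma_X^{2}$ naturally pulls $L_X^{\sigma^{2}}$ upward; one would likely confine the increase to a short interval in the tail so that the long prior history dominates the average. Once a valid example is in hand, the verification reduces to routine integration and an accompanying plot, analogous to the treatment of Counterexample~\ref{cg:2.1}.
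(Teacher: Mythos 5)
Your forward direction is exactly the argument the paper has in mind when it calls the proof ``straightforward'': a decreasing $\sigma_X^{2}$ satisfies $\int_0^t \sigma_X^{2}(u)\,du > t\,\sigma_X^{2}(t)$, hence $L_X^{\sigma^{2}}(t)<1$. That part is correct and needs no further comment.

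The gap is in your counterexample for the converse. The family you propose --- an exponential head glued to a Pareto-type tail $c\,t^{-\alpha}$ --- is structurally incapable of producing $L_X^{\sigma^{2}}(t)<1$ for all $t$, no matter how the gluing parameters are tuned. For any Pareto tail (with $\alpha>2$ so the second moment exists) one computes $\mu_X(t)=t/(\alpha-1)$ and $\sigma_X^{2}(t)=C t^{2}$ on the tail, so $t\,\sigma_X^{2}(t)=Ct^{3}$ while $\int_0^t\sigma_X^{2}(u)\,du\sim \tfrac{C}{3}t^{3}$, forcing $L_X^{\sigma^{2}}(t)\to 3$ (this is precisely the computation in Theorem~\ref{thm:2.2}, and precisely why Counterexample~\ref{cg:2.1} is used for the \emph{opposite} theorem, where one needs $L>1$). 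There is no choice of parameters for which ``the roles of increasing/decreasing VRL are swapped'': a Pareto tail always has increasing VRL, and its increase is not confined to a short interval but persists and dominates the average forever. What is actually needed is a tail whose hazard rate grows fast enough that $\mu_X$ and $\sigma_X^{2}$ are eventually \emph{decreasing}; the paper's Counterexample~\ref{cg:2.2} glues the exponential head to a Gompertz-type tail $\bar F_X(t)\propto e^{\,t-e^{t}}$ (hazard $e^{t}-1$), so that $\sigma_X^{2}$ fails to be decreasing only because of a single upward jump at the junction $t=1$, after which it decays, keeping $L_X^{\sigma^{2}}(t)\le 1$. So your overall strategy (direct integral comparison plus a glued counterexample verified by computation and a plot) matches the paper's, but the concrete construction you commit to would fail, and the fix requires changing the tail family, not the parameters.
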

\begin{proof}
  The proof is straightforward and is therefore omitted. The converse can be demonstrated using the following counterexample.
\end{proof} 
\begin{example}\label{cg:2.2}
    Let $\bar{F_{X}}(t)$   be the survival function defined by 
    \[\bar{F_{X}}(t) = \begin{cases}
        e^{-t}, & 0 \leq t \leq 1\\
        e^{(e-1)+(t-e^{t})} & t > 1
    \end{cases}\] 
Now, 
\[\mu_{X}(t) = \begin{cases}
    1, & 0 \leq t \leq 1\\
    e^{-t}, & t>1
\end{cases} \]
Then, \[L_{\mu}^{X}(t) = \begin{cases}
    1, & 0 \leq t \leq 1\\
    \dfrac{t.e^{-t}}{1 + e^{-1} - e^{-t}}, & t>1
\end{cases}\]
which is a monotonically decreasing function. 
Also,
\[\sigma_{X}^{2}(t) = \begin{cases}
    1, & 0 \leq t \leq 1\\
    2 + (2 - e^{-t})e^{-t}, & t>1
\end{cases}\]
Then, 
\[L_{X}^{\sigma^{2}}(t) =\begin{cases}
    1, & 0 \leq t \leq 1\\
\dfrac{2\mathrm{e}^2t\cdot\left(2\mathrm{e}^t\cdot\left(\mathrm{e}^t+1\right)-1\right)}{\mathrm{e}^t\cdot\left(2\mathrm{e}\cdot\left(\mathrm{e}\cdot\left(\left(2t-1\right)\mathrm{e}^t-2\right)+2\mathrm{e}^t\right)-\mathrm{e}^t\right)+\mathrm{e}^2} , & t > 1  
\end{cases}\]
It can be seen from Figure (\ref{fig:2}) that $\sigma^{2}_{X}(t)$ is not a decreasing function in $t$, but $L_{X}^{\sigma^{2}}(t) < 1$.
 \begin{figure}[H] 
    \centering
    \includegraphics[width=0.7\textwidth]{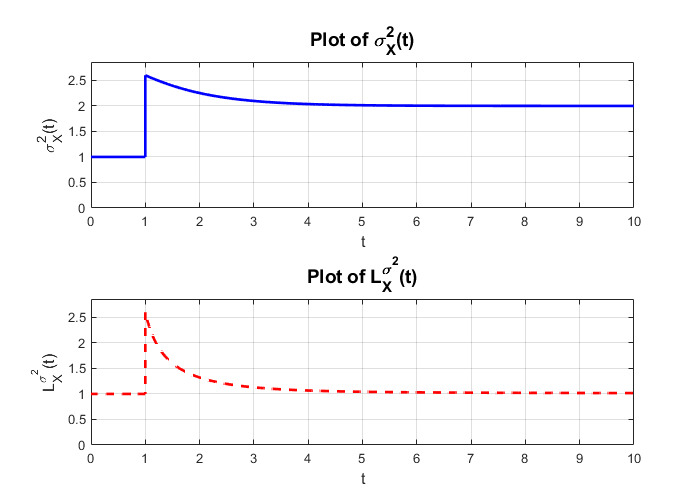}
    \caption{Plot of \(\sigma_{X}^{2}(t)\)  and \(L_{X}^{\sigma^{2}}(t) \)}
    \label{fig:2}
\end{figure}
\end{example}
\begin{theorem}
 If $\sigma^{2}_{X}(t)$  is monotonic function then $L_{X}^{\sigma^{2}}(t)$ need not be monotonic. 
\end{theorem}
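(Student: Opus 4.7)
The statement is an existential/negative one, so my plan is to establish it by exhibiting an explicit counterexample, in the same spirit as Counterexamples~\ref{cg:2.1} and \ref{cg:2.2}. Concretely, I would construct a piecewise-defined survival function $\bar{F}_X(t)$ on $[0,\infty)$, tractable enough on each piece that $\mu_X(t)$, and hence $\sigma^2_X(t)$, can be written in closed form, and then verify directly that $\sigma^2_X(t)$ is monotone on the whole half-line while $L_X^{\sigma^2}(t)$ has an interior extremum.

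To guide the search, I would first examine the sign of
\[
\frac{d}{dt}L_X^{\sigma^2}(t)=\frac{\bigl[\sigma^2_X(t)+t(\sigma^2_X)'(t)\bigr]\int_0^t \sigma^2_X(u)\,du-t\,[\sigma^2_X(t)]^2}{\bigl[\int_0^t \sigma^2_X(u)\,du\bigr]^2}.
\]
Monotonicity of $\sigma^2_X(t)$ controls the sign of $(\sigma^2_X)'(t)$ but leaves the numerator's sign undetermined, because it depends on how fast $\sigma^2_X(t)$ grows \emph{relative to its running average} $\frac{1}{t}\int_0^t\sigma^2_X(u)\,du$. This suggests picking a distribution whose variance residual life makes a smooth transition between two regimes with different VRLAI levels: e.g.\ an exponential piece on $[0,1]$ (where Theorem~\ref{thm:2.1} forces $L_X^{\sigma^2}\equiv 1$) glued to a Pareto-type piece on $(1,\infty)$ (where Theorem~\ref{thm:2.2} gives $L_X^{\sigma^2}\to 3$), while arranging the parameters so that $\sigma^2_X(t)$ stays monotone (say increasing) across the join. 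During the transition the ratio can overshoot its asymptotic value, producing the required non-monotonicity.

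The main obstacle, exactly as in the two earlier counterexamples, will be bookkeeping at the junction $t=1$: because $\mu_X(t)$ and $\sigma^2_X(t)$ for $t\le 1$ depend on the full tail beyond~$1$, the closed-form expressions have to be assembled by integrating over both pieces, and continuity of $\bar{F}_X$ and of the derived moment functions must be checked by hand. Once those explicit formulas are in place, I would confirm monotonicity of $\sigma^2_X(t)$ by a piecewise derivative computation, and exhibit non-monotonicity of $L_X^{\sigma^2}(t)$ either by identifying a zero of the numerator above or, supplementing as in Figures~\ref{fig:1} and \ref{fig:2}, by a plot showing an interior turning point.
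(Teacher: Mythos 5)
Your overall strategy --- prove the claim by exhibiting an explicit piecewise survival function for which $\sigma^2_X(t)$ is monotone while $L_X^{\sigma^2}(t)$ has an interior turning point --- is exactly how the paper proceeds (it gives two such constructions, Counterexamples~\ref{cg:2.3} and~\ref{cg:2.4}, alongside a positive example). The gap is that you never actually deliver the counterexample. For an existential statement of this kind the concrete, verified instance \emph{is} the proof; your text is a construction plan whose decisive step is asserted rather than established (``during the transition the ratio \emph{can} overshoot its asymptotic value''). Nothing in the proposal rules out the possibility that for every admissible gluing of your two pieces the overshoot fails to occur, so as written the statement remains unproved.

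There are also specific weaknesses in the particular construction you sketch. First, Theorem~\ref{thm:2.1} does not give $L_X^{\sigma^2}\equiv 1$ on an exponential \emph{piece} of a glued distribution: $\mu_X(t)$ and $\sigma^2_X(t)$ on $[0,1]$ are computed from the entire tail, including the Pareto part, so the mechanism you invoke for the first regime is not available (you acknowledge the tail-dependence later, but that concession undercuts the stated two-regime picture). Second, a Pareto tail must have a large enough exponent that $E(X^2)<\infty$, or $\sigma^2_X$ is not even defined. Third, arranging that $\sigma^2_X(t)$ stays monotone \emph{across} the junction is precisely the delicate part, and it is deferred rather than checked. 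It is worth noting that the paper's Counterexample~\ref{cg:2.3} sidesteps all of this with a much cheaper device: $\bar F_X\equiv 1$ on $[0,\tfrac12]$ followed by a pure exponential tail, which makes $\sigma^2_X$ a nondecreasing step function (trivially monotone) while $L_X^{\sigma^2}$ jumps to $\tfrac{4}{3}$ at $t=\tfrac12$ and then decreases back toward $1$. Completing your proof requires either carrying your construction through to explicit formulas and verifying both monotonicity claims, or substituting a simple closed-form example of this kind.
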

\begin{justify}
Three examples have been provided to illustrate this theorem.
Example~(\ref{eg:2.1}) illustrates that both \(\sigma^{2}_{X}(t)\) and \(L_{X}^{\sigma^{2}}(t)\) can exhibit monotonic behaviour. However, counterexamples~(\ref{cg:2.3}) and~(\ref{cg:2.4}) demonstrate that this is not always the case. 
\end{justify}
 \begin{eg}\label{eg:2.1}
  Let \(\bar{F_{X}}(t)\) be the survival function defined by 
  \[
  \bar{F_{X}}(t) = \frac{1}{(t+1)^3}, \quad t \geq 0
  \]
  and 
  \[
  \mu_{X}(t) = \frac{t+1}{2}
  \]
  \[
  \sigma_{X}^{2}(t) = \frac{3(t+1)^2}{4}
  \]
  which is a monotonically increasing function for \(t \geq 0\). Now,
  \[
  L_{X}^{\sigma^{2}}(t) = \frac{3(t+1)^2}{t^2 + 3t + 3}
  \]
  It can be seen from Figure (\ref{fig:3}) that \(L_{X}^{\sigma^{2}}(t)\) is also a monotonically increasing function for \(t \geq 0\).
  \begin{figure}[H] 
    \centering
\includegraphics[width=0.6\textwidth]{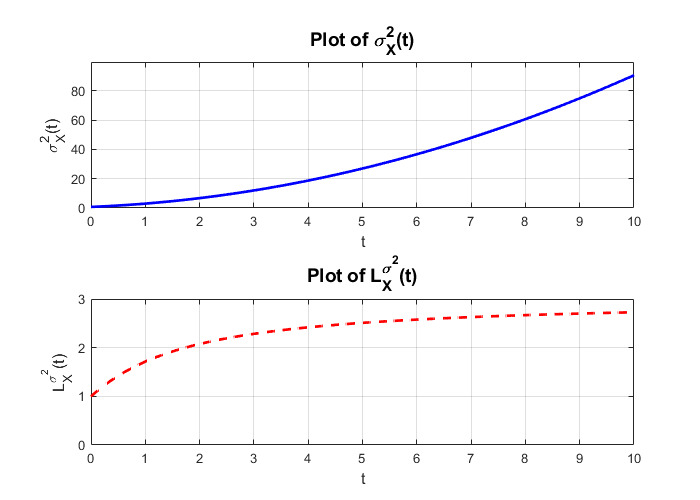}
    \caption{Plot of \(\sigma_{X}^{2}(t)\)  and \(L_{X}^{\sigma^{2}}(t) \)}
    \label{fig:3}
\end{figure}
\end{eg} 
\begin{example}\label{cg:2.3}
  Let \(\bar{F_{X}}(t)\) be the survival function defined by 
  \[
  \bar{F_{X}}(t) = \begin{cases}
      1, & 0 \leq t \leq \frac{1}{2} \\
      \exp{\left(-\left(\ln{2} - \frac{1}{2} + t\right)\right)}, & t > \frac{1}{2}
  \end{cases}
  \]
  Now, 
  \[
  \mu_{X}(t) = \begin{cases}
      1 - t, & 0 \leq t \leq \frac{1}{2} \\
      1, & t > \frac{1}{2}
  \end{cases}
  \]
  and 
  \[
  \sigma_{X}^{2}(t) = \begin{cases}
      \frac{3}{4}, & 0 \leq t \leq \frac{1}{2} \\
      1, & t > \frac{1}{2}
  \end{cases}
  \]
  which is a monotonic (increasing) function. However, 
  \[
  L_{X}^{\sigma^{2}}(t) = \begin{cases}
      1, & 0 \leq t \leq \frac{1}{2} \\
      \frac{1}{1 - \frac{1}{8t}}, & t > \frac{1}{2}
  \end{cases}
  \]
  it can be easily verified that \(L_{X}^{\sigma^{2}}(t)\) is a non-monotonic function.
  \begin{figure}[H] 
    \centering
    \includegraphics[width=0.7\textwidth]{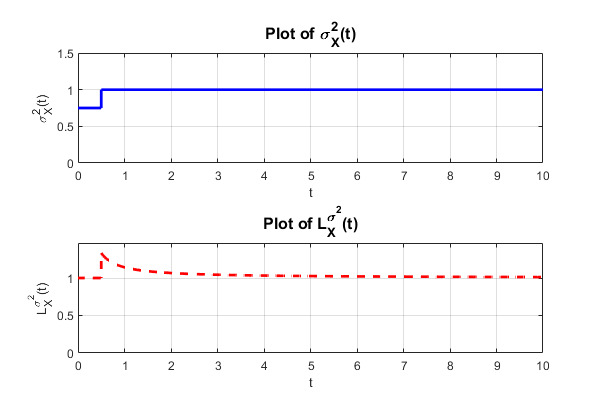}
    \caption{Plot of \(\sigma_{X}^{2}(t)\)  and \(L_{X}^{\sigma^{2}}(t) \)}
    \label{fig:4}
\end{figure}
\end{example}
  \begin{example}\label{cg:2.4}
     Let $\bar{F_{X}}(t)$   be the survival function defined by 
     \[\bar{F_{X}}(t) = \begin{cases}
         e^{-2t} & 0 \leq t < \frac{1}{3}\\
         e^{-\frac{2}{3}} & \frac{1}{3}\leq t < \frac{2}{3}\\
         e^{-2t + \frac{2}{3}} & t \geq \frac{2}{3}
     \end{cases}\]
    Now,
  \[\mu_{X}(t) = \begin{cases}
      \frac{1}{2} + \frac{1}{3}e^{2t - \frac{2}{3}}, & 0 \leq t \leq \frac{1}{3}\\
      \frac{7}{6} - t, &  \frac{1}{3}\leq t < \frac{2}{3}\\
      \frac{1}{2}, & t \geq \frac{2}{3}
  \end{cases} \]
  and  
  \[\sigma_{X}^{2}(t) = \begin{cases}
      \frac{1}{4}+ \left(\frac{1-2t}{3}\right)e^{2t - \frac{2}{3}} - (\frac{1}{9})e^{4t - \frac{4}{3}}, & 0 \leq t < \frac{1}{3}\\
       \frac{1}{4}, & t \geq  \frac{1}{3}
  \end{cases}\]
 which is a monotonic(decreasing) function.  But
 \[L_{X}^{\sigma^{2}}(t) = \begin{cases}
\dfrac{t\cdot\left(4\mathrm{e}^{2t}\cdot\left(\mathrm{e}^{2t}+3\mathrm{e}^\frac{2}{3}\cdot\left(2t-1\right)\right)-9\mathrm{e}^\frac{4}{3}\right)}{\mathrm{e}^{4t}+12\mathrm{e}^\frac{2}{3}\cdot\left(\left(t-1\right)\mathrm{e}^{2t}+1\right)-9\mathrm{e}^\frac{4}{3}t-1}, & 0 \leq t < \frac{1}{3}\\
    \dfrac{9\mathrm{e}^\frac{4}{3}t}{\mathrm{e}^\frac{2}{3}\cdot\left(\mathrm{e}^\frac{2}{3}\cdot\left(9t+7\right)-12\right)+1}, & t \geq \frac{1}{3}
 \end{cases}\] 
 which can be seen from Figure (\ref{fig:5}) that $L_{X}^{\sigma^{2}}(t)$ is a non monotone function.
 \begin{figure}[H] 
    \centering
    \includegraphics[width=0.7\textwidth]{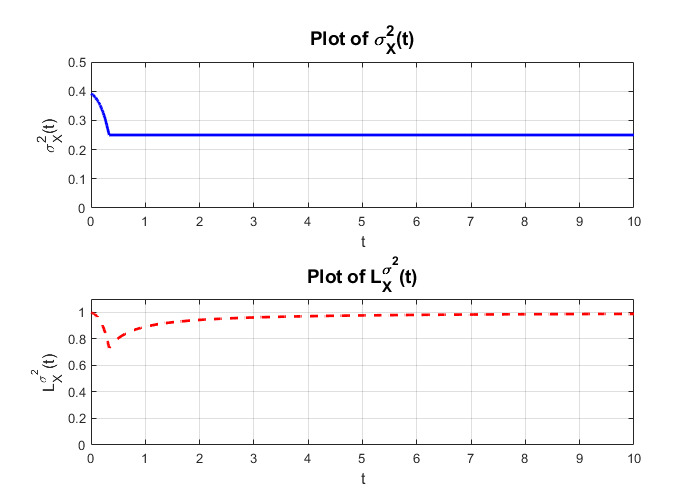}
    \caption{Plot of \(\sigma_{X}^{2}(t)\)  and \(L_{X}^{\sigma^{2}}(t) \)}
    \label{fig:5}
\end{figure}
\end{example}  

\begin{theorem}
  $\sigma_{X}^{2}(t)$ is monotonic $\centernot\implies \mu_{X}(t)$  is monotonic.   
\end{theorem}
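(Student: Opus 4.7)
The plan is to prove this non-implication via a concrete counterexample: I would exhibit a non-negative random variable $X$ for which $\sigma^2_X(t)$ is monotone while $\mu_X(t)$ is not, after which no general argument is required.

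The strategy is to build the survival function piecewise. A natural template is a three-piece construction consisting of two exponential blocks joined by a flat block on which $\bar{F}_X$ is constant. On the flat block the hazard rate vanishes, so by Result~\ref{rs:1.3} one automatically gets $\frac{d}{dt}\sigma^2_X(t)=0$ there, while $\mu_X(t)=\bar{F}_X(t)^{-1}\int_t^\infty \bar{F}_X(x)\,dx$ decreases linearly across this block because the denominator is constant and the numerator loses area at a constant rate. By choosing the exponential rates and the length of the flat block suitably, the MRL can be made to rise on the first block, fall across the flat block, and stabilise on the last block, while $\sigma^2_X$ only decreases and then remains constant.

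The piecewise survival function already appearing in Example~\ref{cg:2.4} realises this template. There one computes $\mu_X(0)=\tfrac{1}{2}+\tfrac{1}{3}e^{-2/3}\approx 0.671$, $\mu_X(\tfrac{1}{3})=\tfrac{5}{6}$, and $\mu_X(\tfrac{2}{3})=\tfrac{1}{2}$, so $\mu_X$ strictly increases on $[0,\tfrac{1}{3}]$ and then strictly decreases on $[\tfrac{1}{3},\tfrac{2}{3}]$, which rules out monotonicity. On the other hand $\sigma^2_X(t)=\tfrac{1}{4}$ for $t\geq \tfrac{1}{3}$, and a short differentiation shows $\sigma^2_X$ is strictly decreasing on $[0,\tfrac{1}{3}]$, so $\sigma^2_X$ is monotone. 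Thus the implication fails, and my proof would consist of presenting this survival function and recording these values.

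The hard part will be checking that $\sigma^2_X$ really is monotone on the first exponential block. By Result~\ref{rs:1.3}, the sign of $\frac{d}{dt}\sigma^2_X(t)$ is controlled by the sign of $\sigma^2_X(t)-\mu^2_X(t)$, so the exponential rate and the placement of the flat block must be chosen so that this sign is preserved throughout $[0,\tfrac{1}{3}]$. Once the parameters are fixed as in Example~\ref{cg:2.4}, the verification is a routine differentiation, and continuity of $\sigma^2_X$ at $t=\tfrac{1}{3}$ together with its constancy on $[\tfrac{1}{3},\infty)$ completes the argument.
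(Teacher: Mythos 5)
Your proposal is correct and follows essentially the same route as the paper: both prove the non-implication by pointing to one of the paper's own piecewise counterexamples. The only difference is the choice of witness --- you invoke Counterexample~\ref{cg:2.4} (where $\sigma^2_X$ is decreasing while $\mu_X$ rises on $[0,\tfrac{1}{3}]$ and falls on $[\tfrac{1}{3},\tfrac{2}{3}]$), whereas the paper cites Counterexample~\ref{cg:2.3}; your verification of the monotonicity of $\sigma^2_X$ via direct differentiation and Result~\ref{rs:1.3} is sound.
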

\begin{proof}
   Using counterexample (\ref{cg:2.3}), it can be verified that \( \sigma_{X}^{2}(t) \) is monotonic, but \( \mu_{X}(t) \) is non-monotonic.

\end{proof}
\begin{theorem}
    If $L_{\mu}^{X}(t) > 1  \implies L_{X}^{\sigma^{2}}(t) > 1 $  .
\end{theorem}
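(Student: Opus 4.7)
The plan is to unpack both ageing–intensity ratios into their integral forms and then link them through the Gupta identity (Result~\ref{rs:1.3}). The hypothesis $L_\mu^X(t) > 1$ rewrites as
\[
t\mu_X(t) > \int_0^t \mu_X(u)\,du,
\]
while the desired conclusion is
\[
t\sigma_X^2(t) > \int_0^t \sigma_X^2(u)\,du.
\]
It is natural to introduce the auxiliary functions $F(t) := t\mu_X(t) - \int_0^t \mu_X(u)\,du$ and $G(t) := t\sigma_X^2(t) - \int_0^t \sigma_X^2(u)\,du$, both vanishing at $t=0$, and to record that $F'(t) = t\mu_X'(t)$ and, using Result~\ref{rs:1.3}, $G'(t) = t\, r_X(t)\bigl(\sigma_X^2(t) - \mu_X^2(t)\bigr)$. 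The goal then reduces to propagating positivity from $F$ to $G$.

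My preferred chain of implications is
\[
L_\mu^X(t) > 1 \;\Longrightarrow\; \sigma_X^2(t) \geq \mu_X^2(t) \;\Longrightarrow\; \sigma_X^{2\prime}(t) \geq 0 \;\Longrightarrow\; L_X^{\sigma^2}(t) > 1,
\]
where the second arrow is Result~\ref{rs:1.3} (since $r_X \geq 0$) and the third is Theorem~\ref{thm:2.4}. For the first arrow, I would exploit the representation
\[
\sigma_X^2(t) - \mu_X^2(t) = \frac{2}{\bar F_X(t)}\int_t^\infty \bigl[\mu_X(x) - \mu_X(t)\bigr]\bar F_X(x)\,dx,
\]
which follows by adding and subtracting $\mu_X(t)$ inside the defining integral of $\sigma_X^2(t)$, and then try to deduce that, under the hypothesis, $\mu_X$ is at time $t$ at most as large as its forward $\bar F_X$-weighted average, so the right-hand side is non-negative.

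The hard part will be precisely this first arrow. The hypothesis $L_\mu^X(t) > 1$ is a statement about $\mu_X$ on $[0,t]$, whereas the sign of $\sigma_X^2(t) - \mu_X^2(t)$ is governed by $\mu_X$ on $[t,\infty)$, so some amount of global information is needed to bridge the two intervals. I expect the argument to work cleanly only when the inequality $L_\mu^X(t) > 1$ is read as holding for every $t > 0$: combined with $F(0) = 0$ and $F'(t) = t\mu_X'(t)$, the global positivity of $F$ should transfer enough averaged monotonicity of $\mu_X$ across every point $t$. If the direct route resists, the backup is the contrapositive: from $L_X^{\sigma^2}(t) \leq 1$ and Result~\ref{rs:1.3} one would deduce, via Theorem~\ref{thm:2.5} in reverse, that $\sigma_X^2 - \mu_X^2$ must be non-positive on average, then translate this back into a failure of $L_\mu^X(t) > 1$ using $\mu_X'(t) = r_X(t)\mu_X(t) - 1$.
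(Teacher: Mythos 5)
Your proposal is not complete: the chain $L_\mu^X(t)>1 \Rightarrow \sigma_X^2(t)\geq\mu_X^2(t) \Rightarrow \frac{d}{dt}\sigma_X^2(t)\geq 0 \Rightarrow L_X^{\sigma^2}(t)>1$ has sound second and third arrows (the second is exactly Result~\ref{rs:1.3} with $r_X\geq 0$, the third is Theorem~\ref{thm:2.4}), but the first arrow --- the one you yourself flag as ``the hard part'' --- is never established, and it is precisely where all the content of the theorem lives. As you correctly observe, $L_\mu^X(t)>1$ is the statement $\mu_X(t)>\frac{1}{t}\int_0^t\mu_X(u)\,du$, i.e.\ a comparison of $\mu_X(t)$ with its \emph{backward} average over $[0,t]$, whereas the sign of $\sigma_X^2(t)-\mu_X^2(t)=\frac{2}{\bar F_X(t)}\int_t^\infty[\mu_X(x)-\mu_X(t)]\bar F_X(x)\,dx$ is a comparison of $\mu_X(t)$ with a \emph{forward} weighted average over $[t,\infty)$. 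No manipulation of $F(t)=t\mu_X(t)-\int_0^t\mu_X(u)\,du$ and its derivative $F'(t)=t\mu_X'(t)$ bridges these two intervals: positivity of $F$ on all of $(0,\infty)$ constrains $\mu_X$ relative to its past, not its future, and a function can exceed its running average everywhere while still dipping below its forward $\bar F_X$-weighted average at some $t$. The contrapositive route you sketch as a backup runs into the same wall in the opposite direction.

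For comparison, the paper closes this gap by a different (and much blunter) move: it reads the hypothesis $L_\mu^X(t)>1$ as saying that $\mu_X(t)$ is increasing in $t$ (i.e.\ IMRL), then invokes Result~\ref{res:1.4} (IMRL implies IVRL) and finally Theorem~\ref{thm:2.4}. In other words, the paper replaces your unproven pointwise inequality $\sigma_X^2\geq\mu_X^2$ with the global monotonicity of $\mu_X$, which is exactly what the standard proof of Result~\ref{res:1.4} needs (it is the same forward-average representation you wrote down, applied with $\mu_X$ nondecreasing so that the integrand $\mu_X(x)-\mu_X(t)$ is nonnegative for $x\geq t$). If you grant yourself the step ``$L_\mu^X(t)>1$ for all $t$ implies $\mu_X$ is increasing,'' your argument and the paper's become essentially the same proof; without it, both your first arrow and that step remain the unjustified core, since the paper's own Theorem~\ref{thm:2.4} (and its counterexample) show that an ageing-intensity ratio exceeding $1$ does not in general force the underlying function to be monotone.
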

\begin{proof}
  Let  $L_{\mu}^{X}(t) > 1 $ then  $\mu_{X}(t)$ is increasing in t. Using the result (\ref{res:1.4}), $\sigma_{X}^{2}(t) $ is increasing in t, again by using theorem (\ref{thm:2.4}), $L_{X}^{\sigma^{2}}(t) > 1$.
\end{proof}
\begin{theorem}
    If $L_{\mu}^{X}(t) < 1  \implies L_{X}^{\sigma^{2}}(t) < 1 $  .
\end{theorem}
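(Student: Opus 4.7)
The plan is to mirror the structure of the immediately preceding theorem step for step, replacing every ``increasing''/``$>1$'' statement with its ``decreasing''/``$<1$'' counterpart. All three ingredients the authors relied on --- the interpretation of the MRLAI value, Result~\ref{res:1.4}, and Theorem~\ref{thm:2.4} --- admit a symmetric decreasing version (in the last case, Theorem~\ref{thm:2.5}), so no new machinery is required.

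Concretely, I would first argue that $L_{\mu}^{X}(t) < 1$ forces $\mu_{X}(t)$ to be decreasing in $t$, exactly as the companion condition $L_{\mu}^{X}(t) > 1$ was read as ``$\mu_{X}$ is increasing'' in the previous theorem. Next I would invoke Result~\ref{res:1.4}, applied in its DMRL$\Rightarrow$DVRL direction, to conclude that $\sigma_{X}^{2}(t)$ is decreasing in $t$. Finally, I would apply Theorem~\ref{thm:2.5}, which states that a decreasing VRL function yields $L_{X}^{\sigma^{2}}(t) < 1$, to close the chain of implications.

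The only delicate point is the opening step: strictly speaking, $L_{\mu}^{X}(t) < 1$ means $\mu_{X}(t)$ lies below its running average on $[0,t]$, which is a priori weaker than being pointwise monotonically decreasing. However, for internal consistency with the preceding theorem --- which makes the analogous identification in the increasing direction --- I would adopt the same convention here, so that the proof reduces to the short chain $L_{\mu}^{X}(t) < 1 \Rightarrow \mu_{X}$ is DMRL $\Rightarrow \sigma_{X}^{2}$ is DVRL $\Rightarrow L_{X}^{\sigma^{2}}(t) < 1$. The main (and essentially only) obstacle is therefore conceptual rather than technical: once the same monotonicity reading of the MRLAI ratio is accepted as was used a moment earlier, the argument is a one-line composition of two already-proved implications.
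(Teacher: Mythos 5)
Your proposal follows the paper's proof essentially verbatim: the paper argues $L_{\mu}^{X}(t) < 1 \Rightarrow \mu_{X}$ decreasing $\Rightarrow \sigma_{X}^{2}$ decreasing (via Result~\ref{res:1.4}) $\Rightarrow L_{X}^{\sigma^{2}}(t) < 1$ (via Theorem~\ref{thm:2.5}), which is exactly your chain. You are right to flag the opening step as the delicate one --- reading $L_{\mu}^{X}(t)<1$ as ``$\mu_{X}$ is decreasing'' is a genuine leap, but it is the same leap the paper itself makes, so your reconstruction faithfully matches the published argument.
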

\begin{proof}
  Let  $L_{\mu}^{X}(t) < 1 $,  then  $\mu_{X}(t)$ is decreasing in t . Using the result (\ref{res:1.4}), $\sigma_{X}^{2}(t) $ is decreasing in t, by using theorem (\ref{thm:2.5}), $L_{X}^{\sigma^{2}}(t) < 1$.
\end{proof} 
\begin{definition}
    A random variable $X$ is said to be increasing in variance residual life ageing intensity(IVRLAI) if the corresponding VRLAI function $L_{X}^{\sigma^{2}}(t)$  is increasing in $t>0$. We call the random variable $X$ as decreasing in variance residual life ageing intensity (DVRLAI) if $L_{X}^{\sigma^{2}}(t)$ is decreasing in $t>0.$
\end{definition}
\begin{theorem}\label{thm:2.9}
  IMRLAI (DMRLAI) property does not imply the IVRLAI (DVRLAI) property.   
\end{theorem}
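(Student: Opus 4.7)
The plan is to dispose of Theorem~\ref{thm:2.9} by producing two explicit counterexamples, one for each of the two non-implications. Since $L_\mu^X$ and $L_X^{\sigma^2}$ are built from the first and second moments of the residual life respectively, increasing or decreasing behaviour of one does not constrain monotonicity of the other, so the natural strategy is to exhibit piecewise-defined survival functions where the desired mismatch can be engineered and then verified directly.

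For the first part, IMRLAI $\not\Rightarrow$ IVRLAI, I would adapt the construction already used in Counterexample~\ref{cg:2.1}. There the survival function is exponential on $[0,1]$ and Pareto-like on $(1,\infty)$, giving $\mu_X(t) = 1$ on $[0,1]$ and $\mu_X(t) = t$ on $(1,\infty)$, for which $L_\mu^X(t)$ is monotonically increasing. The remaining task is to compute $\sigma^2_X(t)$ on both pieces via
\begin{equation*}
\sigma^2_X(t) = \frac{2}{\bar F_X(t)} \int_t^\infty \mu_X(x)\, \bar F_X(x)\, dx - \mu^2_X(t),
\end{equation*}
then form $L_X^{\sigma^2}(t)$ from the definition, and finally show that this function is not monotone in $t$ (the closed form for $L_X^{\sigma^2}$ is already written out in Counterexample~\ref{cg:2.1}, and a plot across the joining point $t=1$ should reveal the non-monotone behaviour that completes the counterexample).

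For the second part, DMRLAI $\not\Rightarrow$ DVRLAI, I would recycle Counterexample~\ref{cg:2.2}, where $\mu_X(t) = 1$ on $[0,1]$ and $\mu_X(t) = e^{-t}$ on $(1,\infty)$, so that $L_\mu^X(t)$ is monotonically decreasing. The associated VRLAI function has already been written down there, and I would demonstrate by direct evaluation (or by inspecting the plot) that $L_X^{\sigma^2}(t)$ fails to be monotonically decreasing, which suffices to refute the implication.

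The main obstacle is not conceptual but computational: the piecewise form of $\bar F_X$ forces a careful splitting of the double integral defining $\sigma^2_X$, and the resulting closed-form expressions for $L_X^{\sigma^2}(t)$ are algebraically heavy (as the formulas in the earlier counterexamples make clear), so that continuity at the join points and the eventual non-monotonicity are most convincingly established with a graph rather than by a clean analytic derivative sign analysis. Once these computations are in place, however, both counterexamples follow immediately and no further argument is needed.
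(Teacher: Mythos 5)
Your proposal matches the paper's proof: the paper likewise disposes of both non-implications by citing Counterexample~\ref{cg:2.1} (where $L_{X}^{\mu}(t)$ is increasing but $L_{X}^{\sigma^{2}}(t)$ is non-monotone) and Counterexample~\ref{cg:2.2} (where $L_{X}^{\mu}(t)$ is decreasing but $L_{X}^{\sigma^{2}}(t)$ is non-monotone), with the non-monotonicity read off from the explicit formulas and their plots. No substantive difference in approach.
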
 
\begin{proof}
From the counterexample (\ref{cg:2.1}), it can be seen that $L_{X}^{\mu}(t)$ is an increasing function in $t$, but $L_{X}^{\sigma^{2}}(t)$ is a nonmonotonic function. Similarly, from counterexample (\ref{cg:2.2}), it can be seen that $L_{\mu}^{X}(t)$ is a decreasing function in $t$, but $L_{X}^{\sigma^{2}}(t)$ is a nonmonotonic function. Hence,
\[
\text{IMRLAI (DMRLAI)} \centernot\Rightarrow \text{IVRLAI (DVRLAI)}
\]
\end{proof}
 \section{Closure Properties of IVRLAI and DVRLAI Classes}
In this section, we have studied the closure properties of the IVRLAI and DVRLAI classes.
 
\begin{theorem}
    Let \( X_{1} \) and \( X_{2} \) be two identical and independently distributed random variables. Then the IVRLAI (DVRLAI) classes are closed under the formation of  a mixture of distributions.
\end{theorem}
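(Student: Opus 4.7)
My first move is to pin down what the hypothesis actually gives us. Since $X_1$ and $X_2$ are declared identically and independently distributed, they share a single survival function $\bar F$, and any mixture $\bar{F}_X(t) = p\bar{F}_{X_1}(t) + (1-p)\bar{F}_{X_2}(t)$ with $p\in[0,1]$ collapses to $\bar{F}_X(t) = \bar F(t)$. Consequently the MRL $\mu_X$, the VRL $\sigma_X^2$, and hence the VRLAI $L_X^{\sigma^2}$ all agree with those of $X_1$, and IVRLAI (resp.\ DVRLAI) membership passes to the mixture at once. I would present this cleanly, using only the definitions in Section~2 plus the linearity of the integrals appearing in $\mu_X$ and $\sigma_X^2$.

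\textbf{If the intended reading is the stronger one}---closure under mixtures of independent, not necessarily identically distributed, IVRLAI (resp.\ DVRLAI) components---my plan would be the following three steps. First, expand $\bar{F}_X = p\bar{F}_1 + (1-p)\bar{F}_2$ and, using the integral form of $\sigma_X^2$ recalled in Section~2, express $\bar{F}_X(t)\sigma_X^2(t)$ as a weighted sum of $\bar{F}_i(t)\sigma_{X_i}^2(t)$ plus a cross term of ``between-components variance'' type arising from the law of total variance. Second, differentiate $L_X^{\sigma^2}(t) = t\sigma_X^2(t)/\int_0^t\sigma_X^2(u)\,du$ and use Result~\ref{rs:1.3} to rewrite $\frac{d}{dt}\sigma_X^2(t)$ in terms of $r_X$, $\mu_X$, and $\sigma_X^2$; this reduces the monotonicity of $L_X^{\sigma^2}$ to a comparison between $\sigma_X^2(t)$ and its average on $[0,t]$. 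Third, propagate the monotonicity of the component VRLAIs through the diagonal summands of that comparison.

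The principal obstacle in this stronger reading is the cross term at the third step: because the variance functional is nonlinear in the mixing weights, $\sigma_X^2$ of the mixture is not a convex combination of the $\sigma_{X_i}^2$, so the standard ``weighted average of monotone functions is monotone'' argument does not apply directly. Controlling the between-components variance piece as $t$ varies is where the real work lies, and it is plausible that the full generality of the statement requires an additional hypothesis (for instance a stochastic order between $X_1$ and $X_2$, which via Result~\ref{res:1.4} propagates from the MRL to the VRL and then tames the cross term).
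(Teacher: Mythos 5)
Your first reading is exactly the paper's proof: since $\bar{F}_{X_1}=\bar{F}_{X_2}$, the mixture $\alpha\bar{F}_{X_1}+(1-\alpha)\bar{F}_{X_2}$ collapses to the common survival function, so the VRLAI function of the mixture coincides with that of $X_1$ and the IVRLAI (DVRLAI) property transfers immediately. The paper does not attempt the stronger, non-identically-distributed version, so your discussion of the cross-term obstacle is extra --- though it correctly diagnoses why the trivial argument would not extend to that case.
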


\begin{proof}
    Let \( X_{1} \) and \( X_{2} \) be two identical and independently distributed random variables with survival functions \(\bar{F}_{X_{1}}(t)\) and \(\bar{F}_{X_{2}}(t)\), respectively. Suppose \( X_{1} \) and \( X_{2} \) are increasing (decreasing) in VRLAI. Consider a mixture of distributions given by
    \begin{align*}
    \bar{F}_{X}(t) &= \alpha \bar{F}_{X_{1}}(t) + (1-\alpha) \bar{F}_{X_{2}}(t) \\
    \bar{F}_{X}(t) &= \alpha \bar{F}_{X_{1}}(t) + (1-\alpha) \bar{F}_{X_{1}}(t), \quad \text{as} \quad \bar{F}_{X_{1}}(t) = \bar{F}_{X_{2}}(t) \\
    \bar{F}_{X}(t) &= \bar{F}_{X_{1}}(t)
\end{align*}
As $\bar{F}_{X_{1}}(t)$  is increasing (decreasing) in VRLAI. Hence, the IVRLAI (DVRLAI) classes are closed with respect to mixtures.
\end{proof}
Further, it is demonstrated that the closure property under convolution does not hold for the IVRLAI and DVRLAI classes.

\begin{example}
    Let \( X_{1} \) and \( X_{2} \) be two identical and independently distributed random variables, each following an exponential distribution with parameter \( 1 \). The variance residual life ageing intensity (VRLAI) function for these random variables is given by
    \[
    L_{X_{1}}^{\sigma^{2}}(t) = L_{X_{2}}^{\sigma^{2}}(t) = 1,
    \]
    which is clearly a monotonic function. Consider the convolution,  \( X_{c} = X_{1} + X_{2} \). The survival function of \( X_{c} \) is
    \[
    \bar{F}_{X_{c}}(t) = \left(t + 1\right) \mathrm{e}^{-t},
    \]
    with the mean residual life function
    \[
    \mu_{X_{c}}(t) = \frac{t + 2}{t + 1},
    \]
    and the variance residual life function
    \[
    \sigma_{X_{c}}^{2}(t) = \frac{t^{2} + 4t + 2}{\left(t + 1\right)^{2}}.
    \]
    The VRLAI function for \( X_{c} \) is given by
    \[
    L_{X_{c}}^{\sigma^{2}}(t) = \frac{t \left(t^{2} + 4t + 2\right)}{\left(t + 1\right) \left(2 \left(t + 1\right) \ln\left(t + 1\right) + t^{2}\right)}.
    \]
    This function is non-monotone for \( t > 0 \), as illustrated by the following values:
    \[
    L_{X_{c}}^{\sigma^{2}}(4.0) = 0.8475004,
    \]
    \[
    L_{X_{c}}^{\sigma^{2}}(6.0) = 0.8402997,
    \]
    \[
    L_{X_{c}}^{\sigma^{2}}(10.0) = 0.8450919.
    \] 
Thus, the IVRLAI class is not closed under convolution.
\end{example}
\begin{justify}
Further, counterexample (\ref{cg:3.2}) shows that the IVRLAI class is not closed under the formation of a coherent system. 
\end{justify}
\begin{example}\label{cg:3.2}
  Let \( X \) be a random variable following an exponential distribution. For this case, the variance residual life function is \( L_{X}^{\sigma^{2}}(t) = 1 \), which is monotonic in \( t > 0 \). Consider \( f_{X_{(2:3)}}(t) \) as the density function of the $2^{nd}$ order statistic in a sample of size 3 from the distribution of \( X \). The density function is given by
  \[
  f_{X_{(2:3)}}(t) = 6 \mathrm{e}^{-3t} \left(\mathrm{e}^{t} - 1\right) \quad \text{for} \quad t \geq 0
  \]
  and its survival function is
  \[
  \bar{F}_{X_{(2:3)}}(t) = \mathrm{e}^{-3t} \left(3 \mathrm{e}^{t} - 2\right).
  \]
  The mean residual life function of the  $2^{nd}$ order statistic is
  \[
  \mu_{X_{(2:3)}}(t) = \frac{9 \mathrm{e}^{t} - 4}{6 \left(3 \mathrm{e}^{t} - 2\right)},
  \]
  and the variance residual life function is
  \[
  \sigma^{2}_{X_{(2:3)}}(t) = \frac{\left(15 \mathrm{e}^{t} - 10\right) \ln\left(3 \mathrm{e}^{t} - 2\right) + \left(12t - 6\right) \mathrm{e}^{t} - 8t + 6}{108 \mathrm{e}^{t} - 72}.
  \]
  The variance residual life aging intensity function of \( X_{(2:3)} \) is given by
  \[
  L_{X_{(2:3)}}^{\sigma^{2}}(t) = \frac{t \left(81 \mathrm{e}^{2t} - 84 \mathrm{e}^{t} + 16\right)}{\left(3 \mathrm{e}^{t} - 2\right) \left(\left(15 \mathrm{e}^{t} - 10\right) \ln\left(3 \mathrm{e}^{t} - 2\right) + \left(12t - 6\right) \mathrm{e}^{t} - 8t + 6\right)}.
  \]
  This function is non-monotone, as evidenced by the following values:
  \[
  L_{X_{(2:3)}}^{\sigma^{2}}(0.5) = 0.92533,
  \]
  \[
  L_{X_{(2:3)}}^{\sigma^{2}}(1.5) = 0.88633,
  \]
  \[
  L_{X_{(2:3)}}^{\sigma^{2}}(3.5) = 0.91047.
  \] 
  Thus, the IVRLAI class is not closed under the formation of a coherent system.
\end{example}

 \section{VRLAI Order }
   We define a probabilistic order based on the variance residual life ageing intensity function \( L_{X}^{\sigma^{2}}(t) \) as follows.

 \begin{definition}
     A random variable $X$ is said to be smaller than random variable $Y$ in the VRLAI order (denoted by $X \underset{VRLAI}{\preceq} Y $) if $ L_{X}^{\sigma^{2}}(t) \preceq  L_{Y}^{\sigma^{2}}(t)$, for all $t>0$.
 \end{definition}  
A class of parametric distributions is identified for which the VRLAI order is present between the random variables.
 \begin{eg}
     Let \( X_{i} \) be a random variable having a Pareto distribution with a survival function
\[
\bar{F}_{X_{i}}(t) = \left(\dfrac{1}{t}\right)^{\alpha_{i}}, \quad \alpha_{i} > 0, \quad t > 0, \quad i = 1, 2.
\]
If \( \alpha_{1} \leq \alpha_{2} \), then
\[
X_{1} \underset{VRLAI}{\preceq} X_{2}.
\]

 \end{eg} 
 \subsection{Properties of VRLAI Order}
\begin{theorem}\label{thm:equ}
    For two random variables \(X\) and \(Y\), the following conditions are equivalent:
    \begin{enumerate}[label=(\roman*)]
        \item \(X \underset{VRLAI}{\preceq} Y\),
        
        \item The ratio \(\frac{\int_{0}^{t} \sigma_{X}^{2}(u)\,du}{\int_{0}^{t} \sigma_{Y}^{2}(u)\,du}\) is decreasing for all \(t > 0\).
    \end{enumerate}
\end{theorem}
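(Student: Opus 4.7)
The plan is to reduce both conditions to the same inequality between logarithmic derivatives of the integrated variance residual life functions; once that reduction is in place, the whole statement collapses to the elementary calculus fact that a quotient of positive differentiable functions is monotone precisely when their logarithmic derivatives are ordered accordingly.

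First I would introduce the shorthand $A(t)=\int_{0}^{t}\sigma_{X}^{2}(u)\,du$ and $B(t)=\int_{0}^{t}\sigma_{Y}^{2}(u)\,du$, so that by the fundamental theorem of calculus $A'(t)=\sigma_{X}^{2}(t)$ and $B'(t)=\sigma_{Y}^{2}(t)$. Under the running assumption $E(X^{2})<\infty$ (and similarly for $Y$), the functions $\sigma_{X}^{2}$ and $\sigma_{Y}^{2}$ are nonnegative and continuous with $\sigma_{X}^{2}(0),\sigma_{Y}^{2}(0)>0$, so $A(t)$ and $B(t)$ are strictly positive and strictly increasing on $(0,\infty)$. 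In this notation the VRLAI functions become $L_{X}^{\sigma^{2}}(t)=tA'(t)/A(t)$ and $L_{Y}^{\sigma^{2}}(t)=tB'(t)/B(t)$, so condition (i), after cancelling the common positive factor $t$, is equivalent to
\[
\frac{A'(t)}{A(t)} \leq \frac{B'(t)}{B(t)}, \qquad t>0.
\]

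Next I would invoke the quotient rule to write
\[
\frac{d}{dt}\!\left(\frac{A(t)}{B(t)}\right) \;=\; \frac{A'(t)B(t)-A(t)B'(t)}{B(t)^{2}} \;=\; \frac{A(t)}{B(t)}\!\left(\frac{A'(t)}{A(t)}-\frac{B'(t)}{B(t)}\right).
\]
Since $A(t)/B(t)>0$, this derivative is $\leq 0$ if and only if the previous displayed inequality holds, i.e.\ if and only if the ratio $\int_{0}^{t}\sigma_{X}^{2}(u)\,du\big/\int_{0}^{t}\sigma_{Y}^{2}(u)\,du$ is decreasing on $(0,\infty)$. That is exactly condition (ii). Each step in the chain is an equivalence, so (i) $\Leftrightarrow$ (ii) is established.

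The only delicate point — and it is mild — is ensuring strict positivity of $A$ and $B$ on $(0,\infty)$ and enough regularity on $\sigma_{X}^{2},\sigma_{Y}^{2}$ that the logarithmic derivative and quotient rule manipulations are legitimate throughout; both are covered by the standing finite-second-moment assumption of the paper. No structural property of the variance residual life functions beyond the fact that they arise as derivatives of $A$ and $B$ is used, so the proof is genuinely just a calculus computation.
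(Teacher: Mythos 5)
Your proof is correct and follows essentially the same route as the paper: both arguments set $H_X(t)=\int_0^t\sigma_X^2(u)\,du$, $H_Y(t)=\int_0^t\sigma_Y^2(u)\,du$ and apply the quotient rule to the ratio to convert the pointwise inequality $L_X^{\sigma^2}(t)\le L_Y^{\sigma^2}(t)$ into the sign of the derivative of $H_X/H_Y$. If anything, your version is slightly more complete, since you phrase every step as an equivalence and thereby obtain (ii) $\Rightarrow$ (i) as well, whereas the paper's proof only writes out the implication (i) $\Rightarrow$ (ii) and leaves the (admittedly routine) reverse direction implicit.
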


\begin{proof}
   (i) implies (ii): \\
    Assume \( X \underset{VRLAI}{\preceq} Y \), which implies 
    \[
    \frac{t \cdot \sigma_{X}^{2}(t)}{\int_{0}^{t} \sigma_{X}^{2}(u) \, du} \leq \frac{t \cdot \sigma_{Y}^{2}(t)}{\int_{0}^{t} \sigma_{Y}^{2}(u) \, du}.
    \]
    Let \( H_{X}(t) = \int_{0}^{t} \sigma_{X}^{2}(u) \, du \) and \( H_{Y}(t) = \int_{0}^{t} \sigma_{Y}^{2}(u) \, du \). Then  \( H_{X}'(t) = \sigma^{2}_{X}(t) \) and \( H_{Y}'(t) = \sigma^{2}_{Y}(t) \), it follows that:
    \[
    H_{X}(t) \cdot H_{Y}'(t) - H_{Y}(t) \cdot H_{X}'(t) \geq 0.
    \]
    This inequality can be rewritten as:
    \[
    \frac{H_{X}(t) \cdot H_{Y}'(t) - H_{Y}(t) \cdot H_{X}'(t)}{(H_{X}(t))^2} \geq 0,
    \]
    which implies
    \[
    g'(t) \geq 0, \quad \text{where} \quad g(t) = \frac{H_{Y}(t)}{H_{X}(t)}.
    \]
    Thus, \( g(t) = \frac{H_{Y}(t)}{H_{X}(t)} \) is an increasing function for \( t > 0 \), which implies that $ g(t) = \frac{H_{X}(t)}{H_{Y}(t)}$is a decreasing function for \( t > 0 \). Hence, we conclude that $ \frac{\int_{0}^{t} \sigma_{X}^{2}(u) \, du}{\int_{0}^{t} \sigma_{Y}^{2}(u) \, du}$
    is decreasing in  \( t > 0 \).
\end{proof}
The reflexive, commutative, and antisymmetric properties of the VRLAI order can be explained as follows:

\begin{theorem}
    The following properties hold for variance residual life average increasing (VRLAI) ordering:
    \begin{enumerate}[label=(\roman*)]
        \item \(X \underset{VRLAI}{\preceq} X\): For any random variable \(X\), the VRLAI ordering holds trivially, meaning \(X\) is VRLAI ordered with respect to itself.
        
        \item If \(X \underset{VRLAI}{\preceq} Y\) and \(Y \underset{VRLAI}{\preceq} Z\), then \(X \underset{VRLAI}{\preceq} Z\): That is, VRLAI ordering is transitive. If \(X\) is VRLAI ordered with respect to \(Y\), and \(Y\) is VRLAI ordered with respect to \(Z\), then \(X\) is VRLAI ordered with respect to \(Z\).

        \item If \(X \underset{VRLAI}{\preceq} Y\) and \(Y \underset{VRLAI}{\preceq} X\), then \(X\) and \(Y\) must have proportional variance residual lives. This implies that the VRLAI ordering in both directions indicates that the variance residual lives of \(X\) and \(Y\) are proportional to one another.
    \end{enumerate}
\end{theorem}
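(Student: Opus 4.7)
The plan is to prove the three parts separately, leveraging the characterisation in Theorem~\ref{thm:equ} for the third (and only non-trivial) part.

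For part (i), reflexivity follows immediately from the reflexivity of $\leq$ on $\mathbb{R}$: since $L_X^{\sigma^2}(t) \leq L_X^{\sigma^2}(t)$ holds for every $t>0$, the definition of the VRLAI order gives $X \underset{VRLAI}{\preceq} X$.

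For part (ii), transitivity follows likewise from the transitivity of $\leq$ on $\mathbb{R}$: chaining the pointwise inequalities $L_X^{\sigma^2}(t) \leq L_Y^{\sigma^2}(t) \leq L_Z^{\sigma^2}(t)$ for every $t>0$ yields $X \underset{VRLAI}{\preceq} Z$. These two parts are one-line observations and I would present them as such.

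The main content is in part (iii). The plan is to invoke Theorem~\ref{thm:equ}. Writing $H_X(t) = \int_0^t \sigma_X^2(u)\,du$ and $H_Y(t) = \int_0^t \sigma_Y^2(u)\,du$, the hypothesis $X \underset{VRLAI}{\preceq} Y$ together with Theorem~\ref{thm:equ} says that $H_X(t)/H_Y(t)$ is decreasing in $t>0$, while the hypothesis $Y \underset{VRLAI}{\preceq} X$ says that $H_Y(t)/H_X(t)$ is decreasing in $t>0$, equivalently that $H_X(t)/H_Y(t)$ is increasing. Being simultaneously increasing and decreasing, the ratio must be constant: $H_X(t) = c\,H_Y(t)$ for some $c>0$ and all $t>0$.

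The final step is to differentiate this identity. Since $H_X'(t) = \sigma_X^2(t)$ and $H_Y'(t) = \sigma_Y^2(t)$ almost everywhere (and continuously wherever the VRL functions are continuous), differentiating gives $\sigma_X^2(t) = c\,\sigma_Y^2(t)$ for all $t>0$, i.e.\ the variance residual life functions of $X$ and $Y$ are proportional. The only mild obstacle is ensuring the direction of the inequalities in Theorem~\ref{thm:equ} is applied consistently (swapping the roles of $X$ and $Y$ in the second hypothesis), after which the argument is essentially a one-line squeeze followed by differentiation.
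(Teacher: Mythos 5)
Your proposal is correct and follows essentially the same route as the paper: parts (i) and (ii) are the same one-line observations, and for part (iii) both arguments reduce to showing that the ratio $H_X(t)/H_Y(t)$ of the integrated VRL functions is constant and then differentiating to obtain $\sigma_X^2(t) = c\,\sigma_Y^2(t)$. The only cosmetic difference is that you reach the constancy of the ratio by applying Theorem~\ref{thm:equ} in both directions (a squeeze between ``increasing'' and ``decreasing''), whereas the paper re-derives it inline by computing $g'(t)=0$ from the pointwise equality $L_X^{\sigma^2}(t)=L_Y^{\sigma^2}(t)$; the underlying quotient-rule computation is identical.
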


\begin{proof}
    \begin{enumerate}[label=(\roman*)]
        \item It is evident that for all $t > 0$, 
        \[
        L_{X}^{\sigma^{2}}(t) \preceq  L_{X}^{\sigma^{2}}(t) \implies X \underset{VRLAI}{\preceq} X
        \]
        demonstrating that the VRLAI ordering holds trivially for any random variable compared with itself.
        
        \item If \(X \underset{VRLAI}{\preceq} Y\), then it follows that
        \[
        L_{X}^{\sigma^{2}}(t) \preceq  L_{Y}^{\sigma^{2}}(t).
        \]
        Similarly, if \(Y \underset{VRLAI}{\preceq} Z\), then
        \[
        L_{Y}^{\sigma^{2}}(t) \preceq  L_{Z}^{\sigma^{2}}(t).
        \]
        Thus,
        \[
        L_{X}^{\sigma^{2}}(t) \preceq  L_{Z}^{\sigma^{2}}(t),
        \]
        which implies that \(X \underset{VRLAI}{\preceq} Z\).

        \item If \(X \underset{VRLAI}{\preceq} Y\), then 
        \[
        L_{X}^{\sigma^{2}}(t) \preceq  L_{Y}^{\sigma^{2}}(t),
        \]
        and if \(Y \underset{VRLAI}{\preceq} X\), then
        \[
        L_{Y}^{\sigma^{2}}(t) \preceq  L_{X}^{\sigma^{2}}(t).
        \]
        Therefore, it must hold that
        \[
        L_{X}^{\sigma^{2}}(t) =  L_{Y}^{\sigma^{2}}(t),
        \]
        implying
        \[
        \frac{t \sigma^{2}_{X}(t)}{\int_{0}^{t} \sigma^{2}_{X}(u) du} = \frac{t \sigma^{2}_{Y}(t)}{\int_{0}^{t} \sigma^{2}_{Y}(u) du}.
        \]
        Let \(H_{X}(t) = \int_{0}^{t} \sigma_{X}^{2}(u) du\) and \(H_{Y}(t) = \int_{0}^{t} \sigma_{Y}^{2}(u) du\). Now, it follows that
        \[
        H_{X}(t) \sigma_{Y}^{2}(t) - H_{Y}(t) \sigma_{X}^{2}(t) = 0.
        \]
        Since \(H_{X}'(t) = \sigma_{X}^{2}(t)\) and \(H_{Y}'(t) = \sigma_{Y}^{2}(t)\), we have
        \[
        H_{X}(t) H_{Y}'(t) - H_{Y}(t) H_{X}'(t) = 0.
        \]
        Dividing both sides by \((H_{Y}(t))^2\), we get
        \[
        \frac{H_{X}(t) H_{Y}'(t) - H_{Y}(t) H_{X}'(t)}{(H_{Y}(t))^2} = 0.
        \]
        Hence, 
        \[
        g'(t) = 0, \quad  \text{where} ~ g(t) = \frac{H_{X}(t)}{H_{Y}(t)},
        \]
        which implies that \(g(t)\) is constant, i.e.,
        \[
        g(t) = c.
        \]
        Therefore, 
        \[
        H_{X}(t) = c H_{Y}(t),
        \]
        which gives
        \[
        \int_{0}^{t} \sigma_{X}^{2}(u) du = c \int_{0}^{t} \sigma^{2}_{Y}(u) du.
        \]
        This leads to
        \[
        \sigma_{X}^{2}(t) = c \sigma_{Y}^{2}(t).
        \]
        Thus, \(X\) and \(Y\) have proportional variance residual lives.
    \end{enumerate}
\end{proof}

The following counterexample illustrates that VRLAI order does not necessarily imply increasing convex (icx) order.
\begin{example}
    Consider the random variable \( X \) that follows an exponential distribution with the survival function
    \[
    \bar{F}_{X}(t) = e^{-\frac{t}{2}}, \quad t \geq 0,
    \]
    and the random variable \( Y \) that follows a Pareto distribution with the survival function
    \[
    \bar{F}_{Y}(t) = \frac{1}{t^{3}}, \quad t \geq 1.
    \]
    It follows that
    \[
    L_{X}^{\sigma^{2}}(t) = 1
    \]
    and
    \[
    L_{Y}^{\sigma^{2}}(t) = 3,
    \]
    for all \( t > 0 \). Since \( L_{X}^{\sigma^{2}}(t) \preceq L_{Y}^{\sigma^{2}}(t) \), we conclude that \( X \underset{VRLAI}{\preceq} Y \).

    Now, define the integrals
    \[
    h(t) = \int_{t}^{\infty} \bar{F}_{X}(u) \, du = 2e^{-\frac{t}{2}},
    \]
    and
    \[
    k(t) = \int_{t}^{\infty} \bar{F}_{Y}(u) \, du = \frac{1}{2t^2}.
    \]
    For \( X \underset{icx}{\preceq} Y \) to hold, we must have \( h(t) \preceq k(t) \) for all \( t \geq 0 \).

    Evaluating at specific values of \( t \), we get:
    \[
    h(0.5) = 1.557602, \quad h(1.0) = 1.213061,
    \]
    \[
    k(0.5) = 2.0, \quad k(1.0) = 0.5.
    \]
    Clearly, \( h(t) \centernot \preceq k(t) \) for all \( t \geq 0 \). Therefore,
    \[
    X \underset{VRLAI}{\preceq} Y \centernot \implies X \underset{icx}{\preceq} Y.
    \]
\end{example}

\begin{example}
    Consider a random variable \( X \) that follows an exponential distribution with survival function
    \[
    \bar{F}_{X}(t) = e^{-3t}, \quad t \geq 0,
    \]
    and a random variable \( Y \) that follows a Pareto distribution with survival function
    \[
    \bar{F}_{Y}(t) = \frac{1}{t^3}, \quad t \geq 1.
    \]
    It can be observed that
    \[
    L_{X}^{\sigma^{2}}(t) = 1
    \]
    and
    \[
    L_{Y}^{\sigma^{2}}(t) = 3,
    \]
    for all \( t > 0 \). Since \( L_{X}^{\sigma^{2}}(t) \preceq L_{Y}^{\sigma^{2}}(t) \), it follows that \( X \underset{VRLAI}{\preceq} Y \).

    Define
    \[
    z(t) = \frac{\int_{t}^{\infty} \int_{u}^{\infty} \bar{F}_{X}(v) \, dv \, du}{\int_{t}^{\infty} \int_{v}^{\infty} \bar{F}_{Y}(v) \, dv \, du} = \frac{2t e^{-3t}}{9}.
    \]
    If \( X \underset{VRL}{\preceq} Y \), then \( z(t) \) should be a decreasing function of \( t \). 

    Evaluating \( z(t) \) at different values of \( t \), we find:
    \[
    z(0.10) = 0.016462627,
    \]
    \[
    z(0.50) = 0.024792240,
    \]
    \[
    z(2.0) = 0.001101668.
    \]
    Clearly, \( z(t) \) is not a decreasing function of \( t \). Therefore,
    \[
    X \underset{VRLAI}{\preceq} Y \centernot \implies X \underset{VRL}{\preceq} Y.
    \]
\end{example}

We have established that variance residual life ageing intensity (VRLAI) ordering does not imply variance residual life (VRL) ordering. Consequently, it follows that VRLAI ordering does not imply mean residual life (MRL) ordering either. The following counterexample illustrates that likelihood ratio ordering does not imply VRLAI ordering.

\begin{example}
    Let \( X \) and \( Y \) be two independent random variables with density functions
    \[
    f_{X}(t) = 25t e^{-5t}, \quad t \geq 0
    \]
    and
    \[
    f_{Y}(t) = 16t e^{-4t}, \quad t \geq 0,
    \]
    respectively. It can be easily verified that the ratio \( \frac{f_{X}(t)}{f_{Y}(t)} \) decreases with \( t \geq 0 \).

    The variance residual life function for \( X \) is given by
    \[
    \sigma_{X}^{2}(t) = \frac{25t^2 + 20t + 2}{25 \left(5t + 1\right)^2}.
    \]
    Hence,
    \[
    a(t) = \int_{0}^{t} \sigma_{X}^{2}(u) \, du = \frac{\left(10t + 2\right) \ln\left(5t + 1\right) + 25t^2}{625t + 125}.
    \]
    Similarly, the variance residual life function for \( Y \) is
    \[
    \sigma_{Y}^{2}(t) = \frac{8t^2 + 8t + 1}{8 \left(4t + 1\right)^2}.
    \]
    Therefore,
    \[
    b(t) = \int_{0}^{t} \sigma_{Y}^{2}(u) \, du = \frac{\left(4t + 1\right) \ln\left(4t + 1\right) + 8t^2}{128t + 32}.
    \]

    Define
    \[
    c(t) = \frac{a(t)}{b(t)} = \frac{\left(128t + 32\right) \left(\left(10t + 2\right) \ln\left(5t + 1\right) + 25t^2\right)}{\left(625t + 125\right) \left(\left(4t + 1\right) \ln\left(4t + 1\right) + 8t^2\right)}.
    \]
    It can be observed that \( c(t) \) is non-monotone as
    \begin{equation*}
        c(0.1) = 0.6358110, \quad c(1.8) = 0.6177504, \quad c(6.5) = 0.6240882.
    \end{equation*}
    Using Theorem \ref{thm:equ}, we have \( X \centernot \leq_{VRLAI} Y \). Thus,
    \[
    X \underset{lr}{\preceq} Y \centernot \implies X \underset{VRLAI}{\preceq} Y.
    \]
\end{example}
\begin{justify}
 Given that the likelihood ratio ordering does not necessarily imply variance residual life ordering (VRLAI), it follows that no other standard orderings---such as failure rate ordering, starting failure rate ordering, or stochastic ordering---imply VRLAI ordering. For a comprehensive discussion on these orderings, see \cite{shaked2007stochastic}.
Now, we provide several conditions under which two random variables satisfy VRLAI ordering.
   
\end{justify}

\begin{theorem}
    Let  $X$ and $Y$ be two non-negative random variables; if $X$ is decreasing in variance residual life average (DVRLA) and $Y$ is increasing in variance residual life average ( IVRLA), then $X \underset{VRLAI}{\preceq}  Y $.
\end{theorem}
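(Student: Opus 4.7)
The plan is to unpack each of the two structural hypotheses into a pointwise statement about the VRLAI function relative to the value $1$, and then to invoke the definition of the VRLAI order directly.

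First, I would recall that ``$X$ is decreasing in variance residual life average'' (DVRLA) says that the time-averaged variance residual life
$$A_X(t) := \frac{1}{t}\int_{0}^{t}\sigma_X^{2}(u)\,du$$
is non-increasing in $t>0$. Differentiating,
$$A_X'(t) \;=\; \frac{\sigma_X^{2}(t)}{t} \;-\; \frac{1}{t^{2}}\int_{0}^{t}\sigma_X^{2}(u)\,du,$$
so $A_X'(t)\le 0$ is equivalent to $t\,\sigma_X^{2}(t)\le \int_{0}^{t}\sigma_X^{2}(u)\,du$, i.e. to $L_X^{\sigma^{2}}(t)\le 1$ for every $t>0$. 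Applying the symmetric calculation to $Y$, the IVRLA hypothesis is equivalent to $L_Y^{\sigma^{2}}(t)\ge 1$ for every $t>0$.

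Combining these two pointwise bounds yields
$$L_X^{\sigma^{2}}(t)\;\le\; 1 \;\le\; L_Y^{\sigma^{2}}(t) \qquad \text{for all } t>0,$$
and the definition of the VRLAI order immediately gives $X\underset{VRLAI}{\preceq} Y$, which is the claim.

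Because the argument reduces entirely to rewriting the two monotonicity-in-average hypotheses, I do not anticipate any genuine obstacle. The only point that warrants care is confirming that DVRLA/IVRLA are being used in the ``time-average'' sense made explicit above (so that the equivalence with $L_X^{\sigma^{2}}(t)\lessgtr 1$ is available); if instead the paper intends them as monotonicity of $\sigma_X^{2}$ itself, the very same conclusion $L_X^{\sigma^{2}}(t)\le 1\le L_Y^{\sigma^{2}}(t)$ can be obtained by invoking Theorems~\ref{thm:2.4} and~\ref{thm:2.5}, after which the final step is unchanged.
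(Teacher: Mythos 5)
Your proof is correct, but it takes a genuinely different route from the paper's. You decouple the two hypotheses entirely: DVRLA is unpacked (via the quotient-rule computation on $A_X(t)=\frac{1}{t}\int_0^t\sigma_X^2(u)\,du$) into the pointwise bound $L_X^{\sigma^2}(t)\le 1$, IVRLA into $L_Y^{\sigma^2}(t)\ge 1$, and the constant $1$ serves as the pivot between the two random variables. The paper instead keeps $X$ and $Y$ coupled: it forms $g(t)=H_Y(t)/H_X(t)$ with $H_X(t)=\int_0^t\sigma_X^2(u)\,du$, argues that $g$ is increasing (a ratio of an increasing positive function to a decreasing positive function, once the hypotheses are read as monotonicity of $H_X(t)/t$ and $H_Y(t)/t$), and then expands $g'(t)\ge 0$ to get $H_X\sigma_Y^2-H_Y\sigma_X^2\ge 0$, i.e.\ $L_X^{\sigma^2}(t)\le L_Y^{\sigma^2}(t)$ directly. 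Your argument is the more elementary of the two and actually proves something slightly stronger (each VRLAI function is separately bounded by $1$ on the correct side, which is in effect the content of Theorems~\ref{thm:2.4} and~\ref{thm:2.5} restated for the averaged quantity); the paper's version has the advantage of passing through the monotone-ratio criterion of Theorem~\ref{thm:equ}, so it exhibits the conclusion in the equivalent form (ii) of that theorem as a by-product. Your closing caveat about the intended meaning of DVRLA/IVRLA is well taken --- the paper never formally defines these classes --- but the time-average reading you adopt is the one consistent with the paper's own assertion that $g'(t)\ge 0$ follows from the hypotheses, so your primary argument is the right one.
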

\begin{proof}
    Let \( H_{X}(t) = \int_{0}^{t} \sigma_{X}^{2}(u) \, du \). Then \( H_{X}'(t) = \sigma_{X}^{2}(t) \). 

    Define \( g(t) = \frac{H_{Y}(t)}{H_{X}(t)} \). Since \( X \) is decreasing in variance residual life average (DVRLA) and \( Y \) is increasing in variance residual life average (IVRLA), it follows that \( g'(t) \geq 0 \). 

    Now, compute \( g'(t) \):
    \[
    g'(t) = \frac{H_{X}(t) \cdot H_{Y}'(t) - H_{Y}(t) \cdot H_{X}'(t)}{(H_{X}(t))^2}
    \]
    Since \( g'(t) \geq 0 \), we have:
    \[
    H_{X}(t) \cdot H_{Y}'(t) - H_{Y}(t) \cdot H_{X}'(t) \geq 0
    \]
    Substituting \( H_{X}'(t) = \sigma_{X}^{2}(t) \) and \( H_{Y}'(t) = \sigma_{Y}^{2}(t) \), this simplifies to:
    \[
    H_{X}(t) \cdot \sigma_{Y}^{2}(t) - H_{Y}(t) \cdot \sigma_{X}^{2}(t) \geq 0
    \]
    Rearranging gives:
    \[
    \frac{\sigma_{Y}^{2}(t)}{H_{Y}(t)} \geq \frac{\sigma_{X}^{2}(t)}{H_{X}(t)}
    \]
    Which implies:
    \[
    \frac{t \cdot \sigma_{Y}^{2}(t)}{\int_{0}^{t} \sigma_{Y}^{2}(u) \, du} \geq \frac{t \cdot \sigma_{X}^{2}(t)}{\int_{0}^{t} \sigma_{X}^{2}(u) \, du}
    \]
    Therefore:
    \[
    L_{X}^{\sigma^{2}}(t) \preceq L_{Y}^{\sigma^{2}}(t)
    \]
    Hence:
    \[
    X \underset{VRLAI}{\preceq} Y
    \]
\end{proof}
\begin{justify}
 Furthermore, we present a counterexample where \( X \) is not decreasing in variance residual life average (DVRLA), yet \( X \underset{VRLAI}{\preceq} Y \). This counterexample demonstrates that the conditions of \( X \) being DVRLA and \( Y \) being increasing in variance residual life average (IVRLA) are sufficient for \( X \underset{VRLAI}{\preceq} Y \) to hold.   
\end{justify}
\begin{example}
    Let \( X \) and \( Y \) be random variables with survival functions
    \[
    \bar{F}_{X}(t) = \frac{1}{\left(t+1\right)^3}, \quad t \geq 0
    \]
    and
    \[
    \bar{F}_{Y}(t) = \frac{1}{t^{3}}, \quad t \geq 1,
    \]
    respectively. The variance residual life functions are given by
    \[
    \sigma^{2}_{X}(t) = \frac{3\left(t+1\right)^2}{4}, \quad t \geq 0
    \]
    and
    \[
    \sigma^{2}_{Y}(t) = \frac{3t^2}{4}, \quad t \geq 1.
    \]

    Now,
    \[
    \frac{1}{t}\int_{0}^{t} \sigma^{2}_{X}(u) \, du = \frac{t^2 + 3t + 3}{4},
    \]
    and
    \[
    \frac{1}{t}\int_{1}^{t} \sigma^{2}_{Y}(u) \, du = \frac{t^{3} - 1}{4t}.
    \]

    Also,
    \[
    L_{X}^{\sigma^{2}}(t) = \frac{3\left(t+1\right)^2}{t^2 + 3t + 3}.
    \]
    It can be easily verified that \( L_{X}^{\sigma^{2}}(t) < 3 \) and \( L_{Y}^{\sigma^{2}}(t) = 3 \) for all \( t > 0 \). Although both \( X \) and \( Y \) are IVRLA, we have \( X \underset{VRLAI}{\preceq} Y \).
\end{example}

\begin{theorem}
    If \( X \) is decreasing in VRL and \( Y \) is increasing in VRL, then 
    \[
    X \underset{VRLAI}{\preceq} Y.
    \]
\end{theorem}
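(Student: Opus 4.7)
The plan is to read this off almost immediately from Theorems \ref{thm:2.4} and \ref{thm:2.5}, by pivoting through the constant value $1$. Specifically, I would write the target inequality $L_X^{\sigma^2}(t) \preceq L_Y^{\sigma^2}(t)$ as a two-step chain
\[
L_X^{\sigma^2}(t) \,<\, 1 \,<\, L_Y^{\sigma^2}(t),
\]
and justify each inequality separately using the previously established monotonicity-to-pointwise implications.

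First I would invoke Theorem \ref{thm:2.5}: since $X$ is decreasing in VRL (i.e.\ $\sigma_X^2(t)$ is decreasing in $t > 0$), that theorem gives $L_X^{\sigma^2}(t) < 1$ for all $t > 0$. Next I would invoke Theorem \ref{thm:2.4}: since $Y$ is increasing in VRL (i.e.\ $\sigma_Y^2(t)$ is increasing in $t > 0$), that theorem yields $L_Y^{\sigma^2}(t) > 1$ for all $t > 0$. Combining these two pointwise inequalities gives $L_X^{\sigma^2}(t) < L_Y^{\sigma^2}(t)$ for every $t > 0$, which is precisely the definition of $X \underset{VRLAI}{\preceq} Y$.

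There is essentially no obstacle here: the work has already been done in Theorems \ref{thm:2.4} and \ref{thm:2.5}, and the present statement is just the natural corollary obtained by placing the DVRL class below, and the IVRL class above, the exponential benchmark $L^{\sigma^2}\equiv 1$ characterised in Theorem \ref{thm:2.1}. The only minor point worth flagging is that one could alternatively route the proof through the equivalent criterion of Theorem \ref{thm:equ}, showing directly that $\int_0^t \sigma_X^2(u)\,du / \int_0^t \sigma_Y^2(u)\,du$ is decreasing; but this is strictly more laborious than the one-line chain above and gains nothing, so I would stick with the pivot-through-$1$ argument.
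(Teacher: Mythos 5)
Your proposal is correct and matches the paper's own proof essentially verbatim: both pivot through the exponential benchmark value $1$, using Theorem~\ref{thm:2.5} to get $L_X^{\sigma^2}(t) < 1$ and Theorem~\ref{thm:2.4} to get $L_Y^{\sigma^2}(t) > 1$. The only cosmetic difference is that the paper also cites Theorem~\ref{thm:2.3} so as to state the second bound as $L_Y^{\sigma^2}(t) \geq 1$ (covering the non-strictly-increasing case), which does not change the argument.
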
 

\begin{proof}
    If \( X \) is decreasing in VRL, then by theorem (\ref{thm:2.5}), \( L_{X}^{\sigma^{2}}(t) < 1 \). Additionally, if \( Y \) is increasing in VRL, then by theorems (\ref{thm:2.3}) and (\ref{thm:2.4}), \( L_{Y}^{\sigma^{2}}(t) \geq 1 \). Therefore, for all \( t > 0 \),
    \[
    L_{X}^{\sigma^{2}}(t) \leq L_{Y}^{\sigma^{2}}(t) \implies X \underset{VRLAI}{\preceq} Y.
    \]
\end{proof}
Further, we examined the closure of the VRLAI order under the formation of a coherent system.
\begin{example}
\begin{justify}
 The VRLAI ordering is not closed under the formation of a parallel system. To illustrate this, consider the following:

Let \( X_{1} \) and \( X_{2} \) be two independent and identically distributed (i.i.d.) random variables with an Erlang distribution and survival function
\[
\bar{F}_{X}(t) = (1 + t)e^{-t}, \quad t \geq 0.
\]
Also, let \( Y_{1} \) and \( Y_{2} \) be two independent and identically distributed (i.i.d.) random variables with an exponential distribution and survival function
\[
\bar{F}_{Y}(t) = e^{-2t}, \quad t \geq 0.
\]

For these variables, we have
\[
\sigma_{X_{i}}^{2}(t) = \frac{\left(2t + 2\right) \ln\left(t + 1\right) + t^{2}}{t + 1}
\]
and
\[
L_{X_{i}}^{\sigma^{2}}(t) = \frac{t \left(t^{2} + 4t + 2\right)}{\left(t + 1\right) \left(\left(2t + 2\right) \ln\left(t + 1\right) + t^{2}\right)} < 1
\]
for all \( t > 0 \) and \( i = 1, 2 \). Additionally,
\[
L_{Y_{i}}^{\mu}(t) = 1
\]
for all \( t > 0 \) and \( i = 1, 2 \). 

Thus, \( X_{i} \leq_{VRLAI} Y_{j} \) for all \( i, j = 1, 2 \). If we define \( X = \max_{1 \leq i \leq 2} X_{i} \) and \( Y = \max_{1 \leq j \leq 2} Y_{j} \), then we have
\[
\mu_{X}(t) = \frac{\int_{t}^{\infty} \left(1 - [1 - \bar{F}_{X}(x)]^{2}\right) \, dx}{1 - [1 - \bar{F}_{X}(t)]^{2}} = \frac{2 \left(4 \left(t + 2\right) \mathrm{e}^{t} - t \left(t + 3\right)\right) - 5}{4 \left(t + 1\right) \left(2 \mathrm{e}^{t} - t - 1\right)}
\]
and
\[
\sigma_{X}^{2}(t) = \frac{2 \left(8 \left(t + 3\right) \mathrm{e}^{t} - t \left(t + 4\right)\right) - 9}{4 \left(t + 1\right) \left(2 \mathrm{e}^{t} - t - 1\right)} - \frac{\left(2 \left(4 \left(t + 2\right) \mathrm{e}^{t} - t \left(t + 3\right)\right) - 5\right)^{2}}{16 \left(t + 1\right)^{2} \left(2 \mathrm{e}^{t} - t - 1\right)^{2}}
\]

If we calculate
\[
L_{X}^{\sigma^{2}}(t) = \frac{t \cdot \sigma^{2}_{X}(t)}{\int_{0}^{t} \sigma^{2}_{X}(u) \, du},
\]
then \(0.78 < L_{X}^{\sigma^{2}}(t) < 1\). Similarly,
\[
\mu_{Y}(t) = \frac{\int_{t}^{\infty} \left(1 - [1 - \bar{F}_{Y}(y)]^{2}\right) \, dy}{1 - [1 - \bar{F}_{Y}(t)]^{2}} = \frac{4 \mathrm{e}^{2t} - 1}{8 \mathrm{e}^{2t} - 4}
\]
and
\[
\sigma_{Y}^{2}(t) = \frac{16 \mathrm{e}^{4t} - 12 \mathrm{e}^{2t} + 1}{16 \left(2 \mathrm{e}^{2t} - 1\right)^{2}}.
\]

Hence,
\[
L_{Y}^{\sigma^{2}}(t) = \frac{2t \cdot \left(16 \mathrm{e}^{4t} - 12 \mathrm{e}^{2t} + 1\right)}{\left(2 \mathrm{e}^{2t} - 1\right) \left(\left(6 \mathrm{e}^{2t} - 3\right) \ln\left(2 \mathrm{e}^{2t} - 1\right) + \left(4t - 2\right) \mathrm{e}^{2t} - 2t + 2\right)}.
\]

Here,
\[
0.92584 < L_{Y}^{\sigma^{2}}(t) < 1
\]
for all \( t > 0 \). For \( t = 0.01 \), \( L_{X}^{\sigma^{2}}(0.01) = 1 \) and \( L_{Y}^{\sigma^{2}}(0.01) = 0.9998 \). Hence, for some \( t > 0 \),
\[
\max_{1 \leq i \leq 2} X_{i} \centernot \leq_{VRLAI} \max_{1 \leq j \leq 2} Y_{j}.
\]

This shows that the VRLAI order is not closed under the formation of a coherent system.   
\end{justify}
\end{example}

 \section{Conclusion }
\begin{justify}
Understanding the ageing behaviour of a system that has already operated for a certain period ($t > 0$) is essential for assessing its reliability and performance. This paper introduced the Variance Residual Life Ageing Intensity (VRLAI) function as a quantitative measure of ageing and explored its behaviour in this context. Additionally, we examined its monotonic properties by comparing it with both the Variance Residual Life function and the Mean Residual Life function.

We introduced two new classes of ageing metrics, IVRLAI and DVRLAI, and analysed their closure properties under various reliability operations, such as distribution mixtures, convolutions of distributions, and the formation of coherent systems. Moreover, we defined an ageing order based on the Variance Residual Life function and discussed its various properties and moreover it it is demonstrated that the VRLAI order is not preserved under the formation of a coherent system.

Future research should explore the ageing behaviour of systems using the $r$-th Mean Residual Life function, with the aim of further enhancing the understanding of system reliability and performance over time.

\end{justify}

 \bibliographystyle{acm}
\bibliography{bib}
\end{document}